\definecolor{darkgreen}{rgb}{0,0.5,0}
\numberwithin{equation}{section}
\newtheorem{thm}[equation]{\sc Theorem}
\newtheorem{lem}[equation]{\sc Lemma}
\newtheorem{prop}[equation]{\sc Proposition}
\newtheoremstyle{notation}{3pt}{3pt}{}{}{\itshape}{:}{.5em}{\thmname{#1}}
\theoremstyle{notation}
\newtheorem{rem}{\it Remark}
\newtheorem{defin}{\it Definition}
\newtheorem{ex}{\it Example}
\renewcommand{\@seccntformat }[1]{\csname the#1\endcsname. }
\chardef\xnearrow='045
\chardef\xnwarrow='055
\chardef\xsearrow='046
\chardef\xswarrow='056
 \newcommand\ind{\mbox{{\rm ind}}}
 \newcommand\Hom{\mbox{\rm Hom}}
 \newcommand\Aut{\mbox{\rm Aut}}
 \newcommand\Coker{\mbox{\rm Coker}}
\newcommand\arcleq{\leq_{\sf arc}}
\newcommand\extleq{\leq_{\sf ext}}
\newcommand\rep{\mbox{{\rm rep}}}
\newcommand\mrep{\mbox{ $_2{\mathbb V}_\gamma^\beta$}}
\newcounter{boxsize}
\newcounter{tempcounter}
 \newcommand\arr[2]{\arrow <2mm> [0.25,0.75] from #1 to #2}
\newcommand\ssize{\scriptscriptstyle}
\newcommand\boxes[2]{\ifthenelse{#2=3}{$\scriptstyle P_2^{#1}$}{%
                                       $\scriptstyle P_{#2}^{#1}$}}
\begin{document}

\thispagestyle{empty}
\color{black}
\phantom m\vspace{-2cm}

\bigskip\bigskip
\begin{center}
{\large\bf Operations on Arc Diagrams and
Degenerations for Invariant Subspaces of Linear Operators. Part II} 
\end{center}

\smallskip

\begin{center}
Mariusz Kaniecki, Justyna Kosakowska and Markus Schmidmeier 
\footnote{The first two named authors are partially supported 
by Research Grant No.\ DEC-2011/ 02/A/ ST1/00216
of the Polish National Science Center.}
\footnote{This research is partially supported 
by a~Travel and Collaboration Grant from the Simons Foundation (Grant number
245848 to the third named author).}

\vspace{1cm}

\medskip \parbox{10cm}{\footnotesize{\bf MSC 2010:} 
Primary: 14L30, 
16G20, 
Secondary:
16G70, 
05C85,  
47A15  
}

\medskip \parbox{10cm}{\footnotesize{\bf Key words:} 
degenerations, partial orders, Hall polynomials,
nilpotent operators, invariant subspaces, Littlewood-Richardson
tableaux}

\end{center}


{\it Dedicated to Professor Daniel Simson on the occasion of his 75th birthday}

\begin{abstract}
For a partition $\beta$, denote by $N_\beta$ the nilpotent linear operator
of Jordan type $\beta$.  Given partitions $\beta$, $\gamma$, we investigate
the representation space ${}_2{\mathbb V}_{\gamma}^\beta$ of all short exact sequences
$$ \mathcal E: 0\to N_\alpha \to N_\beta \to N_\gamma \to 0$$
where $\alpha$ is any partition with each part at most 2.
Due to the condition on $\alpha$, the isomorphism type of a sequence
$\mathcal E$ is given by an arc diagram $\Delta$; denote by ${\mathbb V}_\Delta$
the subset of ${}_2{\mathbb V}_{\gamma}^\beta$ of all sequences isomorphic to $\mathcal E$.

Thus, the space ${}_2{\mathbb V}_{\gamma}^\beta$ carries a stratification given by the
subsets of type ${\mathbb V}_\Delta$. 
We compute the dimension of each stratum and 
show that the boundary of a stratum ${\mathbb V}_\Delta$ consists exactly of those
${\mathbb V}_{\Delta'}$ where $\Delta'$ is obtained from $\Delta$ by a non-empty
sequence of arc moves of five possible types {\bf (A) -- (E)}.

The case where all three partitions are fixed has been studied in [3] and [4].
There, arc moves of types {\bf (A) -- (D)} suffice to describe the boundary
of a ${\mathbb V}_\Delta$ in ${\mathbb V}_{\alpha,\gamma}^\beta$.  Our fifth move {\bf (E)}, ``explosion'', is needed to break up an arc into two poles to allow for changes
in the partition $\alpha$.
\end{abstract}



\section{Introduction}

In this paper we generalize results presented in \cite{kos-sch} and \cite{ks-survey} where
arc diagrams were applied to investigate the degeneration order for  a
certain class of invariant subspaces of nilpotent linear operators. We enlarge the set of arc-moves
introduced in \cite{kos-sch} and compare the partial order induced by these moves
with the partial order given by degenerations in a~variety associated with
invariant subspaces of nilpotent linear operators.

\subsection{Arc diagrams and the arc-order}

Two diagrams of arcs and poles are said to be in {\it arc-order}
if the first is obtained from the second by a sequence of moves of type
$$\setlength{\unitlength}{0.09cm}
\begin{picture}(60,28)(0,-5)
  \put(0,0){\begin{picture}(20,8)(0,5)
      \put(0,4){\line(1,0){20}}
      \multiput(3,3)(4,0)4{$\bullet$}
      \put(10,4){\oval(4,4)[t]}
      \put(10,4){\oval(12,12)[t]}
    \end{picture}
  }
  \put(20,20){\begin{picture}(20,8)(0,5)
      \put(0,4){\line(1,0){20}}
      \multiput(3,3)(4,0)4{$\bullet$}
      \put(8,4){\oval(8,8)[t]}
      \put(12,4){\oval(8,8)[t]}
    \end{picture}
  }
  \put(25,15){\line(-1,-1){9}}
  \put(23,7){\makebox(0,0){\bf\footnotesize (A)}}
  \put(17,13){\rotatebox{45}{\makebox[0mm]{$<_{\rm arc}$}}}
  \put(35,15){\line(1,-1){9}}
  \put(37,7){\makebox(0,0){\bf\footnotesize(C)}}
  \put(42,12){\rotatebox{-45}{\makebox[0mm]{$>_{\rm arc}$}}}
  \put(40,0){\begin{picture}(20,8)(0,5)
      \put(0,4){\line(1,0){20}}
      \multiput(3,3)(4,0)4{$\bullet$}
      \put(6,4){\oval(4,4)[t]}
      \put(14,4){\oval(4,4)[t]}
    \end{picture}
  }
\end{picture}
\qquad
\begin{picture}(52,28)(0,-5)
  \put(0,0){\begin{picture}(16,8)(0,5)
      \put(0,4){\line(1,0){16}}
      \multiput(3,3)(4,0)3{$\bullet$}
      \put(6,4){\oval(4,4)[t]}
      \put(12,4){\line(0,1){7}}
    \end{picture}
  }
  \put(18,20){\begin{picture}(16,8)(0,5)
      \put(0,4){\line(1,0){16}}
      \multiput(3,3)(4,0)3{$\bullet$}
      \put(8,4){\oval(8,8)[t]}
      \put(8,4){\line(0,1){7}}
    \end{picture}
  }
  \put(36,0){\begin{picture}(16,8)(0,5)
      \put(0,4){\line(1,0){16}}
      \multiput(3,3)(4,0)3{$\bullet$}
      \put(10,4){\oval(4,4)[t]}
      \put(4,4){\line(0,1){7}}
    \end{picture}
  }
  \put(23,15){\line(-1,-1){9}}
  \put(21,7){\makebox(0,0){\bf\footnotesize (B)}}
  \put(15,13){\rotatebox{45}{\makebox[0mm]{$<_{\rm arc}$}}}
  \put(29,15){\line(1,-1){9}}
  \put(31,7){\makebox(0,0){\bf\footnotesize (D)}}
  \put(36,12){\rotatebox{-45}{\makebox[0mm]{$>_{\rm arc}$}}}
\end{picture}
\quad
\begin{picture}(72,28)(0,-5)
  \put(0,0){\begin{picture}(16,8)(0,5)
      \put(0,4){\line(1,0){16}}
      \multiput(3,3)(8,0)2{$\bullet$}
      \put(8,4){\oval(8,8)[t]}
    \end{picture}
  }
  \put(0,20){\begin{picture}(16,8)(0,5)
      \put(0,4){\line(1,0){16}}
      \multiput(3,3)(8,0)2{$\bullet$}
      \put(4,4){\line(0,1){7}}
      \put(12,4){\line(0,1){7}}
    \end{picture}
  }
  \put(8,15){\line(0,-1){9}}
  \put(3,7){\makebox(0,0){\bf\footnotesize (E)}}
  \put(10,9){\rotatebox{90}{\makebox[0mm]{$<_{\rm arc}$}}}
\end{picture}
$$

If the arc diagrams $\Delta$ and $\Delta'$ are in relation, we write $\Delta\arcleq \Delta'$.
Since each move either decreases the
number of crossings or the number of poles, $\leq_{\sf arc}$ is a partial order.
The arc-moves (A), (B), (C) and (D) were introduced in \cite{kos-sch} 
and the arc-move (E) is new.

Formally, an arc diagram is a finite set of arcs and poles in the
Poincar\'e half plane.  We assume that all end points are natural numbers
(arranged from right to left) and permit multiple arcs and poles.
In addition, each natural number can be marked by one or several circles
(which are not affected by any of the arc moves).

\begin{ex}
The diagrams $\Delta$ and $\Delta'$ are in arc-order via a single move of type {\bf (B)}.
$$\raisebox{5ex}[0mm]{$\Delta:$}
  \qquad
  \framebox(26,18)[t]{\begin{picture}(24,12)(0,5)
      \put(0,4){\line(1,0){24}}
      \multiput(3,3)(4,0)5{$\bullet$}
      \put(3,1){$\scriptstyle 5$}
      \put(7,1){$\scriptstyle 4$}
      \put(11,1){$\scriptstyle 3$}
      \put(15,1){$\scriptstyle 2$}
      \put(19,1){$\scriptstyle 1$}
      \put(8,4){\oval(8,8)[t]}
      \put(12,4){\oval(8,8)[t]}
      \put(12,4){\line(0,1){10}}
      \put(20,4){\line(0,1){10}}
  \end{picture}}
  \qquad\qquad\raisebox{5ex}[0mm]{$\Delta':$}
  \qquad
  \framebox(26,18)[t]{\begin{picture}(24,12)(0,5)
      \put(0,4){\line(1,0){24}}
      \multiput(3,3)(4,0)5{$\bullet$}
      \put(3,1){$\scriptstyle 5$}
      \put(7,1){$\scriptstyle 4$}
      \put(11,1){$\scriptstyle 3$}
      \put(15,1){$\scriptstyle 2$}
      \put(19,1){$\scriptstyle 1$}
      \put(12,4){\oval(16,16)[t]}
      \put(12,4){\oval(8,8)[t]}
      \put(12,4){\line(1,1){10}}
      \put(12,4){\line(-1,1){10}}
  \end{picture}}
  $$
\end{ex}

\subsection{Invariant subspaces of nilpotent linear operators}

Let $k$ be a field.  We call a $k[T]$-module
\begin{equation}
 N_\alpha=N_\alpha(k)=\bigoplus_{i=1}^s k[T]/(T^{\alpha_i}),
 \label{eq-N-alpha}
\end{equation}
where $\alpha=(\alpha_1\geq\cdots\geq\alpha_s)$ is a partition, 
a nilpotent linear operator.
Thus, $N_\alpha$ is a representation of the quiver
$$
\hbox{\beginpicture
\setcoordinatesystem units <0.5cm,0.5cm>
\put{} at -4 -1
\put{} at 1 1
\put{$\circ$} at 0 0
\circulararc 150 degrees from -1.2 0 center at -.7 0
\circulararc -160 degrees from -1.2 0 center at -.7 0
\put{$\varphi$} at -1.8 0
\arr{-.31 .3} {-.2 .2}
\endpicture}
$$
subject to the relation $\varphi^{\alpha_1}=0$.
By $\mathcal S$ we denote the category of all representations $M$ of the quiver
$$
\hbox{\beginpicture
\setcoordinatesystem units <0.5cm,0.5cm>
\put{} at -4 -1
\put{} at 3.2 1
\put{$\circ$} at 0 0
\put{$\circ$} at  2 0
\circulararc 160 degrees from 3.2 0  center at 2.7 0
\circulararc -150 degrees from  3.2 0 center at 2.7 0
\circulararc 150 degrees from -1.2 0  center at -.7 0
\circulararc -160 degrees from -1.2 0 center at -.7 0
\arr{.4 0}{1.6 0}
\put{$\ssize \varphi$} at 3.6 0
\put{$\ssize \varphi'$} at -1.6 0
\put{$\ssize \psi$} at 1 0.4
\put{$\ssize 1$} at .2 -.5
\put{$\ssize 2$} at 1.8 -.5
\arr{2.31 0.3} {2.2 0.2}  
\arr{-.31 0.3} {-.2 0.2}
\put{$Q:$} at -4 0
\endpicture}
$$
subject to the following conditions:  The endomorphisms $M(\varphi')$ and
$M(\varphi)$ are nilpotent; the linear map $M(\psi)$ is a monomorphism;
and the relation $\varphi\psi=\psi\varphi'$ is satisfied.
Equivalently, $\mathcal S$ is the category of all monomorphisms between
nilpotent linear operators, or, the category of all short exact sequences
of nilpotent linear operators.

Given three partitions $\alpha,\beta,\gamma$, we denote by $\mathcal{S}_{\alpha,\gamma}^\beta$ 
the full subcategory  of $\mathcal{S}$ consisting of 
all short exact sequences $0\to N_\alpha\to N_\beta\to N_\gamma\to 0$
and consider the full subcategories of $\mathcal{S}$  with objects: 

\begin{equation}
 {\mathcal S}_\gamma^\beta=\bigcup_\alpha {\mathcal S}_{\alpha,\gamma}^\beta \; ,\;\;\;\; {}_2{\mathcal S}_\gamma^\beta=\bigcup_{\alpha;\, \alpha_1\leq 2} {\mathcal S}_{\alpha,\gamma}^\beta.
\label{eq-categories-S}
 \end{equation}

We assign to each object $M$ in $_2\mathcal S_\gamma^\beta$ an arc 
diagram $\Delta(M)$ which depends only on the isomorphism type of $M$.
By $_2\mathcal D_\gamma^\beta$ we denote the set of all arc diagrams
of type $(\beta,\gamma)$.
Given $\Delta\in{_2\mathcal D_\gamma^\beta}$, we denote by $\alpha(\Delta)$
the partition which contains a part 2 for each arc, and a part 1 for each 
pole.  Thus, a short exact sequence with arc diagram $\Delta$
has the form $0\to N_{\alpha(\Delta)}\to N_\beta\to N_\gamma\to 0$.

Let $k$ be an algebraically closed field, 
let $d=(a,b)\in\mathbb N^2$ be a dimension vector with $b\geq a$,
and let $\beta$ and $\gamma$ be partitions of length $b$ and $b-a$,
respectively.

\medskip
In this paper we are interested in the subspace $_2\mathbb V_\gamma^\beta$
of the representation space $\rep_d(Q)$ consisting of 
all representations which are isomorphic to an object in 
$_2\mathcal S_\gamma^\beta$.

\medskip
The space $_2\mathbb V_\gamma^\beta$ is invariant under the operation of 
the algebraic group $G=Gl_a(k)\times Gl_b(k)$ acting on $\rep_d(Q)$;
the $G$-orbits in $_2\mathbb V_\gamma^\beta$ are in one-to-one correspondence
with the isomorphism classes in $_2\mathcal S_\gamma^\beta$.
For $\Delta\in{_2\mathcal D}_\gamma^\beta$ we denote by $\mathbb V_\Delta$ the
$G$-orbit of all representations which have arc diagram $\Delta$.

\subsection{Main results}

The arc-diagrams of invariant subspaces give a~stratification of ${}_2{\mathbb V}_{\gamma}^\beta$:
$$
{}_2{\mathbb V}_{\gamma}^\beta=\bigcup_\Delta\mathbb{V}_\Delta.
$$

More precisely, we have:

\begin{prop}
The set $\{\mathbb V_\Delta:\Delta\in\mathcal D\}$
where $\mathcal D= {_2\mathcal D_\gamma^\beta}$ forms a
stratification for $\mathbb V={_2\mathbb V}_\gamma^\beta$ in the sense that 
\begin{enumerate}
\item each $\mathbb V_\Delta$ is locally closed in $\mathbb V$;
\item $\mathbb V$ is the disjoint union 
$\bigcup_{\Delta\in\mathcal D}^\bullet \mathbb V_\Delta$;
\item and for each $\Delta$ there is a finite subset $U_\Delta\subset\mathcal D$
such that the closure $\overline{\mathbb V}_\Delta$ is just the union
$\bigcup_{\Gamma\in U_\Delta} \mathbb V_\Gamma$.
\end{enumerate}
\end{prop}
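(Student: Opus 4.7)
The plan is to bootstrap the three clauses off two ingredients already present in the text: (i) the stated bijection between $G$-orbits in ${}_2\mathbb V_\gamma^\beta$ and isomorphism classes in ${}_2\mathcal S_\gamma^\beta$, together with the classification of the latter by arc diagrams, and (ii) standard topological properties of morphic algebraic group actions. A preliminary observation is that $\mathcal D = {}_2\mathcal D_\gamma^\beta$ is finite: the subobject partition must satisfy $|\alpha| = |\beta|-|\gamma|$ with all parts at most $2$, of which there are finitely many choices, and for each such $\alpha$ there are only finitely many arc diagrams of type $(\beta,\gamma)$ with $\alpha(\Delta)=\alpha$, since the endpoints are bounded and each supports finitely many arcs and poles of the required total count.

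Granted this, clause (2) is immediate: every $M\in\mathbb V$ is isomorphic to an object of ${}_2\mathcal S_\gamma^\beta$ and so carries a well-defined arc diagram $\Delta(M)$, placing $M$ in exactly one stratum $\mathbb V_{\Delta(M)}$; distinct arc diagrams correspond to non-isomorphic sequences, hence to disjoint $G$-orbits. For clause (1), I would invoke the classical theorem that every orbit of a morphic action of an algebraic group on a variety is open in its closure, applied to the action of $G = Gl_a(k) \times Gl_b(k)$ on $\rep_d(Q)$ and restricted to the $G$-stable subset $\mathbb V$.

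Clause (3) is the substantive point. Taking the closure $\overline{\mathbb V}_\Delta$ inside $\mathbb V$, the continuity of the $G$-action guarantees that this closure is $G$-stable, hence a union of $G$-orbits, that is, a union of strata $\mathbb V_\Gamma$. Setting $U_\Delta = \{\Gamma \in \mathcal D : \mathbb V_\Gamma \subseteq \overline{\mathbb V}_\Delta\}$ supplies the required finite index set, thanks to the preliminary finiteness of $\mathcal D$.

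The main obstacle is really only the preliminary correspondence between isomorphism classes in ${}_2\mathcal S_\gamma^\beta$ and arc diagrams in $\mathcal D$ --- this is the combinatorial heart of the setup and rests on the Littlewood--Richardson-style classification of short exact sequences of nilpotent operators with subobject having parts $\leq 2$. Once this bijection, and with it the finiteness of $\mathcal D$, is granted, all three clauses reduce to routine orbit-closure arguments in the theory of algebraic group actions.
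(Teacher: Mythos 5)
Your proof is correct and is essentially the argument the paper relies on (the Proposition is stated without an explicit proof, being a direct consequence of the setup): each $\mathbb V_\Delta$ is a single $G$-orbit by the bijection of Theorem~\ref{thm-pic-bipic}, there are finitely many such orbits, and the three clauses then follow from the standard facts that orbits of algebraic group actions are open in their closures and that orbit closures (taken inside the $G$-stable set $\mathbb V$) are unions of orbits. No gaps.
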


Moreover an~arc diagram determines the dimension of its stratum as follows.

\begin{thm}\label{thm-second-main}
Let $k$ be an algebraically closed field, and let $\beta,\gamma$ be partitions.
Suppose the arc diagram 
$\Delta\in{_2\mathcal D_\gamma^\beta}$
has $x(\Delta)$ crossings. Then
$$\dim\mathbb{V}_\Delta\;=\;|\beta|^2+|\alpha|^2-n(\alpha)-n(\beta)-n(\gamma)-|\beta|-x(\Delta),$$
where 
$\alpha=\alpha(\Delta)$ 
and where $n(\lambda)=\sum_{i}\lambda_i(i-1)$ denotes the {\bf moment} and $|\lambda|=\lambda_1+\lambda_2+\ldots$ - the {length}
of a~partition $\lambda$.
\end{thm}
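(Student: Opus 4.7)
The plan is to apply the orbit--stabilizer theorem to the single $G$-orbit $\mathbb V_\Delta \subset {}_2\mathbb V_\gamma^\beta$, reducing the problem to computing the dimension of the endomorphism algebra of a representative in the subspace category. Since $G=GL_{|\alpha|}(k)\times GL_{|\beta|}(k)$ has dimension $|\alpha|^2+|\beta|^2$, and since the stabilizer of any $M \in \mathbb V_\Delta$ in $G$ is precisely the automorphism group $\Aut_{\mathcal S}(M)$, which is an open subset of the finite-dimensional algebra $\End_{\mathcal S}(M)$, the theorem becomes equivalent to the identity
\[
\dim_k\End_{\mathcal S}(M)\;=\;n(\alpha)+n(\beta)+n(\gamma)+|\beta|+x(\Delta).
\]

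To attack this identity I would use the Krull--Schmidt decomposition $M=\bigoplus_\mu M(\mu)$, where $\mu$ runs over the arcs, poles, and circles of $\Delta$. The indecomposable summands in ${}_2\mathcal S$ are explicit: a circle at position $i$ yields $(0\subset N_{\beta_i})$; a pole at position $i$ yields $(\mathrm{soc}\,N_{\beta_i}\subset N_{\beta_i})$; and an arc between positions $i,j$ yields an indecomposable whose subspace is of Jordan type $(2)$ inside $N_{\beta_i}\oplus N_{\beta_j}$. Accordingly,
\[
\End_{\mathcal S}(M)=\bigoplus_{\mu,\nu}\Hom_{\mathcal S}\bigl(M(\mu),M(\nu)\bigr),
\]
and each term can be computed by a short case analysis using the standard formulas $\dim\Hom(N_\lambda,N_\mu)=\sum_k\lambda'_k\mu'_k$ and $\dim\End(N_\lambda)=|\lambda|+2n(\lambda)$, with the answer depending only on the types of $\mu,\nu$ and their relative positions on the number line.

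The final step is the summation. The ``generic'' contributions -- those independent of whether two arcs of $\Delta$ actually cross -- should recombine into the quantity $n(\alpha)+n(\beta)+n(\gamma)+|\beta|$ via standard identities relating partitions to their conjugates, notably $|\lambda|+2n(\lambda)=\sum_k(\lambda'_k)^2$. Each pair of arcs of $\Delta$ then contributes exactly one additional dimension to $\End_{\mathcal S}(M)$ precisely when the two arcs cross, and these contributions assemble to give the term $x(\Delta)$.

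The main obstacle lies in this summation. One must carefully organise the case-by-case computation of $\dim\Hom_{\mathcal S}(M(\mu),M(\nu))$ and show that everything except the ``crossing contributions'' telescopes cleanly into the desired closed form $n(\alpha)+n(\beta)+n(\gamma)+|\beta|$. Matching the remaining excess exactly to the number of crossings $x(\Delta)$, rather than to some other combinatorial invariant of $\Delta$, is the technical heart of the proof and where conjugate-partition bookkeeping becomes essential.
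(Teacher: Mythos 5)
Your opening move is sound and coincides with the paper's first step: by orbit--stabilizer, $\dim\mathbb V_\Delta=|\alpha|^2+|\beta|^2-\dim\Aut_{\mathcal S}(M)=|\alpha|^2+|\beta|^2-\dim_k\End_{\mathcal S}(M)$, so the theorem is indeed equivalent to the identity $\dim_k\End_{\mathcal S}(M)=n(\alpha)+n(\beta)+n(\gamma)+|\beta|+x(\Delta)$ (which checks out on small examples, e.g.\ $\End(B_2^{m,r})=m-1+3r$ against $n((m,r))+n((m-1,r-1))+(m+r)=m+3r-1$). But from there the proposal has two problems. First, the Krull--Schmidt decomposition is misstated: a circle at position $i$ corresponds to $P_2^{\beta_i}=(N_{(2)},N_{(\beta_i)},\iota)$, not to $(0\subset N_{\beta_i})$; the summands $P_0^{\beta_i}=(0,N_{(\beta_i)},0)$ come from the \emph{undecorated} points of the diagram and are omitted from your indexing set entirely, even though they contribute nontrivially to $\End_{\mathcal S}(M)$ (e.g.\ $[P_0^\ell,P_0^m]=\min\{\ell,m\}$); and an arc joining adjacent parts $m,m-1$ corresponds to the decomposable object $P_2^m\oplus P_0^{m-1}$, so ``arcs, poles, circles'' do not index the indecomposable summands.

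Second, and more seriously, the entire content of the theorem is concentrated in the summation you defer. You correctly identify it as ``the technical heart,'' but you do not carry it out, and it is not routine: for two arcs $B_2^{m,r}$ and $B_2^{\ell,t}$ the correction term $-{\bf 1}\{\ell>m\,\text{and}\,t\leq r\}$ in the Hom table must be matched against the interleaving condition for a crossing, and all pole/arc, pole/pole and $P_0$/anything interactions must be shown to contribute nothing beyond the generic part that assembles into $n(\alpha)+n(\beta)+n(\gamma)+|\beta|$. The paper sidesteps this computation altogether: it quotes the orbit-dimension formula $\dim G_f=\deg h_{\alpha,\gamma}^\beta+\deg a_\alpha-x(\Delta)$ in $\mathbb V_{\alpha,\gamma}^\beta$ (Theorem~\ref{thm-second-main-old}, proved in \cite{kos-sch}) and transfers it to ${}_2\mathbb V_\gamma^\beta$ by observing that the stabilizers for both group actions have dimension $\dim\Aut(Y)$, which is then eliminated between the two orbit--stabilizer identities --- no computation of $\End_{\mathcal S}(M)$ is ever needed. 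A self-contained argument along your lines amounts to re-proving that cited theorem, and as written the proof is incomplete until that bookkeeping is actually done.
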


The second main result of this paper describes the boundary of 
each stratum.

\begin{thm}\label{thm-first-main}
Suppose that $k$ is an algebraically closed field and that $\beta,\gamma$
are partitions.
For arc diagrams $\Gamma,\Delta\in{_2\mathcal D_\gamma^\beta}$ we have
$$
\Gamma\in U_\Delta
\qquad\text{if and only if}\qquad
\Delta\leq_{\sf arc}\Gamma.$$
\end{thm}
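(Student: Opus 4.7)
I would establish the two implications of the theorem separately. Both reduce to checking one arc-move at a time: the forward implication by transitivity of orbit closure, and the converse by an inductive argument on codimension inside $\overline{\mathbb{V}}_\Delta$.

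For the implication $\Delta \leq_{\sf arc} \Gamma \Rightarrow \mathbb{V}_\Gamma \subset \overline{\mathbb{V}}_\Delta$, moves \textbf{(A)}--\textbf{(D)} preserve the partition $\alpha$, so the orbits $\mathbb{V}_\Delta, \mathbb{V}_\Gamma$ lie in the smaller subvariety $\mathbb{V}_{\alpha,\gamma}^\beta$ and the desired closure relation is already proved in \cite{kos-sch, ks-survey}; it transfers verbatim to ${}_2\mathbb{V}_\gamma^\beta$. For the new move \textbf{(E)}, in which an arc between endpoints $i,j$ is replaced by two poles at $i$ and $j$, I would construct an explicit one-parameter family $\mathcal{E}_t$ of short exact sequences in $\mathcal S$: for $t \neq 0$ a single $k[T]/(T^2)$-direct summand of $N_{\alpha(\Delta)}$ has its generator mapped simultaneously to the two cyclic generators of $N_\beta$ linked by the arc (scaled by $1$ and $t$), producing an object of $\mathbb{V}_\Delta$; at $t = 0$ this summand breaks up into two $k[T]/(T)$-summands mapping independently to the two endpoints, producing an object of $\mathbb{V}_\Gamma$.

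For the converse, I would first use Theorem~\ref{thm-second-main} to verify that each of the five arc-moves strictly decreases $\dim \mathbb{V}_\Delta$, so the arc-order is compatible with the dimension stratification of Proposition 1.1. Proceeding by induction on codimension inside $\overline{\mathbb{V}}_\Delta$, I would analyse a boundary stratum $\mathbb{V}_\Gamma$ in two cases. If $\alpha(\Delta) = \alpha(\Gamma)$, both orbits lie in $\mathbb{V}_{\alpha,\gamma}^\beta$ and the fixed-$\alpha$ result of \cite{kos-sch} supplies a chain of moves \textbf{(A)}--\textbf{(D)} from $\Delta$ to $\Gamma$. Otherwise, $\alpha(\Gamma)$ is obtained from $\alpha(\Delta)$ by replacing some parts $2$ by pairs of parts $1$, and I would select arcs of $\Delta$ to explode so as to produce an intermediate diagram $\Delta'$ with $\alpha(\Delta') = \alpha(\Gamma)$ and $\Delta \leq_{\sf arc} \Delta' \leq_{\sf arc} \Gamma$, reducing once more to the fixed-$\alpha$ case.

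The hard part is precisely this factoring step: given $\mathbb{V}_\Gamma \subset \overline{\mathbb{V}}_\Delta$ with $\alpha(\Delta) \neq \alpha(\Gamma)$, one must identify \emph{which} arcs of $\Delta$ to explode so that the resulting $\Delta'$ remains comparable to $\Gamma$ in arc-order. An arbitrary explosion can easily produce a diagram incomparable with $\Gamma$, so the choice must be canonical with respect to $\Gamma$. I expect the solution to require a combinatorial matching between the arcs of $\Delta$ and the pole positions of $\Gamma$, tracking how crossings and endpoints behave, and very likely supported by an auxiliary partial order, such as the hom-order or a tableau order on the Littlewood-Richardson tableaux associated with the triples $(\alpha,\beta,\gamma)$, which sandwiches the orbit-closure order and the arc-order.
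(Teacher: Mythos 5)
Your forward direction is sound. For moves \textbf{(A)}--\textbf{(D)} the fixed-$\alpha$ degenerations from \cite{kos-sch} apply, and the one-parameter family you sketch for \textbf{(E)} does work: taking the operator on the subspace to send $e_2\mapsto te_1$, $e_1\mapsto 0$ and the embedding $e_1\mapsto(T^{m-1},0)$, $e_2\mapsto(tT^{m-2},T^{r-1})$ gives an object isomorphic to $B_2^{m,r}$ for $t\neq 0$ and to $P_1^m\oplus P_1^r$ at $t=0$. The paper reaches the same conclusion by a different route: each of the five moves is realized by a short exact sequence with $U\oplus V\cong Z$ (for \textbf{(E)} it is $0\to P_1^m\to B_2^{m,r}\to P_1^r\to 0$), so $\leq_{\sf arc}$ implies $\leq_{\sf ext}$ (Lemma~\ref{lem-tab-ext}), and then $\leq_{\sf ext}\Rightarrow\leq_{\sf deg}$ follows from Riedtmann's theorem. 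Either route is fine here.

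The converse is where your proposal has a genuine gap, and you say so yourself. The ``factoring step'' --- deciding \emph{which} arcs of $\Delta$ to explode so that the intermediate diagram $\Delta'$ with $\alpha(\Delta')=\alpha(\Gamma)$ remains arc-comparable to $\Gamma$ --- is precisely the combinatorial heart of the theorem, and ``I expect the solution to require a combinatorial matching\dots very likely supported by an auxiliary partial order'' is a conjecture, not an argument. The paper never performs this factoring and never splits into cases on $\alpha(\Delta)$ versus $\alpha(\Gamma)$. Instead it uses semicontinuity of $\dim\Hom(X,-)$ to pass from $\mathbb V_\Gamma\subseteq\overline{\mathbb V}_\Delta$ to $\Delta\leq_{\sf hom}\Gamma$, and then proves $\leq_{\sf hom}\Rightarrow\leq_{\sf arc}$ by a single algorithm (Proposition~\ref{proposition-key-lemma}): one works with the difference matrix $\delta H(Y,Z)$ on the Auslander--Reiten quiver of $\mathcal S_2^n$ (a Moebius band with zero boundary orbits, Lemma~\ref{lemma-zeros}), locates a parallelogram of strictly positive entries whose two distinguished corners correspond to genuine direct summands of $Z$ --- Lemmas~\ref{lemma-positive-region} and~\ref{lemma-positive-region-ii} propagate positivity diagonal by diagonal until such summands are found --- and peels off one arc move while keeping $\delta H\geq 0$. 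Which of the five move types occurs is read off \emph{afterwards} from where the parallelogram sits (type \textbf{(E)} arises exactly when $P_1^1$ lies inside it), so all five moves, including the one that changes $\alpha$, are handled uniformly and the induction terminates because each step lowers the number of crossings. To complete your proposal along your own lines you would have to construct the canonical matching between arcs of $\Delta$ and pole positions of $\Gamma$ from scratch; the hom-order route is exactly how the paper sidesteps that difficulty.
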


In the proof of the ``only if'' part, we use a version of an algorithm from \cite{kos-sch} which
delivers a sequence of arc moves that convert $\Gamma$ to $\Delta$.

\begin{rem}
\begin{enumerate}
 \item In \cite{kos-sch} we investigated arc- and deg-orders in sets ${\mathbb V}_{\alpha,\gamma}^\beta(k)$, where
$\alpha_1\leq 2$. In this paper we work in a~bigger set ${}_2{\mathbb V}_{\gamma}^\beta$ and prove similar 
results without  fixing the partition $\alpha$. 
Moreover, in Section \ref{section-non-examples} we present examples 
showing that we have to fix the partitions $\beta$ and $\gamma$.
\item Working with arc-diagrams, we have to assume that $\alpha_1\leq 2$, because otherwise we do not
have the natural 
bijection between arc diagrams and orbits of our group action.  
\item
In the paper we investigate the degeneration order induced by this group action and compare 
this order with the arc-order and with two algebraic orders (hom-order and ext-order),
see Section \ref{sec-four-partial} for definitions.
\end{enumerate}

\end{rem}


\subsection{Organization of this paper}

The paper is organized as follows.
\begin{itemize}
 \item In Section~\ref{sec-definitions} we introduce 
   terminology and describe a~bijection between arc diagrams and isomorphism
     types of objects in the category $_2{\mathcal S}_{\gamma}^\beta$. 
\item In Section \ref{sec-partial-orders} we recall the definition  of the partial orders 
$\leq_{\sf arc}$, $\leq_{\sf ext}$, $\leq_{\sf deg}$ and $\leq_{\sf hom}$. 
Moreover, in Theorem~\ref{thm-main} we discuss relations between them.
\item In Section \ref{section-dim-hom} we give a brief exposition of the category  $\mathcal S_2(k)$. 
\item Some technical facts for later use are presented in Section \ref{section-technical}.
\item Section \ref{chapter-hom-tab} contains the proof of the implication 
$$ \leq_{\sf hom} \quad\Longrightarrow \quad\leq_{\sf arc}$$
that is the crucial part of the proof of Theorem \ref{thm-first-main}.
Our proof uses methods derived from \cite{kos-sch}.
\item In Section \ref{section-poset-arc} some combinatorial properties of the poset $({}_{2}\mathcal{S}_\gamma^\beta,\leq_{\sf arc})$
are presented.
\item In Theorems \ref{thm-first-main} and \ref{thm-main} we assume that two 
partitions $\beta$ and $\gamma$ are fixed. 
Some relevant counterexamples indicated in Section \ref{section-non-examples} show that 
this assumption is necessary. 
\item Section \ref{sec-dim-orbit} contains a~proof of Theorem \ref{thm-second-main}.
\end{itemize}

\section{Arc diagrams and invariant subspaces}\label{sec-definitions}

For fixed partitions $\gamma\subseteq\beta$, there is a~bijection between 
the set of arc diagrams and the set of isomorphism classes of objects in the category $_2{\mathcal S}_{\gamma}^\beta$
(\ref{eq-categories-S}).
In this section we describe this bijection.

In \cite{kos-sch} Littlewood-Richardson tableaux and Klein tableaux were applied to
associate an arc diagram with an object in $\mathcal{S}_2$, 
where $\mathcal{S}_2=\bigcup_{\gamma,\beta}{}_2\mathcal{S}_{\gamma}^\beta$. 
For the definition of a~Klein tableau for an object in $\mathcal S_2$
we refer to \cite{klein} or \cite{sch}.

\subsection{Invariant subspaces}

Let $k$ be an~arbitrary field. 
By $\mathcal N$ or $\mathcal{N}(k)$
we  denote the category of all
nilpotent linear operators $N_\alpha$ (\ref{eq-N-alpha}).
We write the objects of $\mathcal N$ as pairs $(V,\varphi)$
where $V$ is the underlying $k$-vector space and $\varphi:V\to V$
the nilpotent $k$-linear endomorphism given by multiplication by $T$.
If $(V,\varphi)$, $(V',\varphi')$ are objects in $\mathcal{N}$, then a~morphism
$f:(V,\varphi)\to (V',\varphi')$
in $\mathcal N$
is a~linear map $f:V\to V'$ such that $\varphi'f=f\varphi$.
Let 
\begin{equation}
N_\alpha=(k^{|\alpha|},\varphi_\alpha), 
\end{equation}
where $\varphi_\alpha$ is given by the nilpotent block matrix consisting of Jordan blocks of type $\alpha$.

Denote by $\mathcal S$ the category 
of all systems $f=(N_\alpha,N_\beta,f)$,
where $f$ is a~monomorphism. For $f=(N_{\alpha},N_{\beta},f)$ and $g=(N_{\alpha^\prime},N_{\beta^\prime},g)$ a morphism $H:f\rightarrow g$ is a pair $(h_1,h_2)$ of homomorphisms $h_1:N_{\alpha}\rightarrow N_{\alpha^\prime}$ and $h_2:N_{\beta}\rightarrow N_{\beta^\prime}$ such that $$g\circ h_1=h_2\circ f.$$

For a natural number $n$,
we write $\mathcal S_n$ or $\mathcal S_n(k)$ for the full subcategory of $\mathcal S$
of all systems where the operator acts on the subspace with nilpotency index at most $n$.
Thus, the objects in $\mathcal S_2$ are the systems $(N_\alpha,N_\beta,f)$ where $\alpha_1\leq 2$.

\subsection{Pickets and bipickets}
\label{section-pickets}

The  category $\mathcal{S}_2(k)$ is of particular interest for us
in this paper. Each indecomposable object is
either isomorphic to a~{\it picket} that is, it has the form
$$P_\ell^m=(N_{(\ell)},N_{(m)},\iota)$$
where $0\leq\ell\leq\min\{2,m\}$ (so the ambient space $N_{(m)}$ has only one Jordan block,
and $N_{(\ell)}$ is the unique $T$-invariant subspace of dimension $\ell$),
or to a  {\it bipicket}
$$B_2^{m,r}=(N_{(2)},N_{(m,r)},\delta)$$
where $1\leq r\leq m-2$
and $\delta:k[T]/(T^2)\to k[T]/(T^m)\oplus k[T]/(T^r)$ is given by
$\delta(1)=(T^{m-2},T^{r-1})$.
Whenever we want to emphasize the dependence on the field $k$, we will write $P_\ell^m=P_\ell^m(k)$ and
$B_2^{m,r}=B_2^{m,r}(k)$.

\medskip

Thanks to this classification we can associate with any object in $\mathcal{S}_2(k)$ an arc diagram 
and partition $(\beta,\gamma)$ type as follows.
First we list some arc diagrams in the following table.

\begin{center}
\begin{tabular}{|c|c|c|c|c|c|}\hline
\multicolumn6{|c|}
             {\raisebox{-1ex}[0mm]{\bf Arc diagrams
                  for the objects in $\ind\mathcal S_2$}}
             \\[2ex] \hline
$X:$\raisebox{1ex}{\phantom!}\raisebox{-1.5ex}{\phantom!} & $P_0^m$ & $P_1^m$ & $P_2^m$ & $P_2^{m}\oplus P_0^{m-1}$& $\quad\quad B_2^{m,r}\quad \quad$ 

     \\ \hline
$\Delta(X):$ & $\emptyset$ &
             $\begin{picture}(8,9)(0,3)
             \put(0,4){\line(1,0){8}}
             \put(3,3){$\bullet$}
             \put(4,4){\line(0,1){5}}
             \put(4,1){\makebox[0mm]{$\scriptstyle m$}}
             \end{picture}$
          &
             $\begin{picture}(8,9)(0,3)
             \put(0,4){\line(1,0){8}}
             \put(3,3){$\bullet$}
				 \put(4,4){\beginpicture
					\circulararc 360 degrees from 0 0 center at 0 -6.5
					\endpicture}
             \put(4,1){\makebox[0mm]{$\scriptstyle m$}}
             \end{picture}$
			&
             $\begin{picture}(16,9)(0,3)
             \put(0,4){\line(1,0){16}}
             \put(3,3){$\bullet$}
             \put(11,3){$\bullet$}
             \put(8,4){\oval(8,8)[t]}
             \put(4,1){\makebox[0mm]{$\scriptstyle m$}}
             \put(12,1){\makebox[0mm]{$\scriptstyle m-1$}}
             \end{picture}$
          &
             $\begin{picture}(16,9)(0,3)
             \put(0,4){\line(1,0){16}}
             \put(3,3){$\bullet$}
             \put(8,4){\makebox[0mm]{$\cdots$}}
             \put(11,3){$\bullet$}
             \put(8,4){\oval(8,8)[t]}
             \put(4,1){\makebox[0mm]{$\scriptstyle m$}}
             \put(12,1){\makebox[0mm]{$\scriptstyle r$}}
             \end{picture}$\\[2ex]  \hline
 {\scriptsize partition $(\beta,\gamma)$}
&
{\scriptsize $\beta=(m)$}
&
{\scriptsize $\beta=(m)$}
&
{\scriptsize $\beta=(m)$}
&
{\scriptsize $\beta=(m,m-1)$}
&
{\scriptsize $\beta=(m,r)$}
\\
 {\scriptsize  type of $X$}
&
{\scriptsize $\gamma=(m)$}
&
{\scriptsize $\gamma=(m-1)$}
&
{\scriptsize{$\gamma=(m-2)$}}
&
{\scriptsize $\gamma=(m-1,m-2)$}
&
{\scriptsize $\gamma=(m-1,r-1)$}
\\ \hline
\end{tabular}
\end{center}

  The arc diagram for an~object
$X$ in $\mathcal S_2$ is created in the following way.
We decompose $$X=Y\oplus (P_2^{s_1}\oplus P_0^{s_1-1})\oplus\ldots\oplus(P_2^{s_r}\oplus P_0^{s_r-1})\oplus P_2^{m_1}\oplus\ldots
 \oplus P_2^{m_k}\oplus P_0^{w_1}\oplus\ldots\oplus P_0^{w_l},$$

where $Y$ does have neither summands of type $P_2^m$ nor of type $P_0^m$,
and $w_1\ldots,w_l\not\in\{m_1-1,\ldots,m_k-1\}$.
For any indecomposable summand of $Y$ and any summand  
$(P_2^{s_1}\oplus P_0^{s_1-1}),\ldots,(P_2^{s_r}\oplus P_0^{s_r-1}), P_2^{m_1},\ldots, 
P_2^{m_k}, P_0^{w_1},\ldots, P_0^{w_l}$ we create the arc diagram using the above table.
The arc diagram $\Delta(X)$ of $X$ is the union of all these arc diagrams.

Similarly we define the {\it type} of $X$ as the union of partitions $\beta$ and $\gamma$ 
described in the table above.

\smallskip
\noindent{\it Example:} For the object
$$X\;=\;B_2^{5,3}\oplus B_2^{4,2}\oplus P_2^5 \oplus P_0^2\oplus P_2^3 \oplus P_1^3\oplus P_0^1\oplus P_1^1,$$
the arc diagram is

\hfil\framebox(26,18)[t]{\begin{picture}(24,12)(0,5)
    \put(-20,6){$\Delta(X)$:}
      \put(0,4){\line(1,0){24}}
      \multiput(3,3)(4,0)5{$\bullet$}
      \put(3,1){$\scriptstyle 5$}
      \put(7,1){$\scriptstyle 4$}
      \put(11,1){$\scriptstyle 3$}
      \put(15,1){$\scriptstyle 2$}
      \put(19,1){$\scriptstyle 1$}
      \put(8,4){\oval(8,8)[t]}
      \put(12,4){\oval(8,8)[t]}
		\put(14,4){\oval(4,4)[t]}
      \put(12,4){\line(0,1){10}}
      \put(20,4){\line(0,1){10}}
		\put(4,4){\beginpicture
					\circulararc 360 degrees from 0 0 center at 0 -6.5
					\endpicture}
  \end{picture}}
\hfil

and the type of $X$ is $(\beta,\gamma)=((5,5,4,3,3,3,2,2,1,1),(4,3,3,2,2,2,1,1,1)).$

\begin{rem}
  Note that two non-isomorphic object in $\mathcal S_2$ can have the same arc diagram, e.g.
 $B_2^{5,2}$ and $B_2^{5,2}\oplus P_0^3$. However, if we fix partitions $\gamma$ and $\beta$,
  the arc diagram uniquely determines the object it is associated with.
\end{rem}

\begin{defin}We denote by $_2\mathcal D_\gamma^\beta$ the set of all arc diagrams which arise 
from objects in $_2\mathcal S_\gamma^\beta$.
\end{defin}

We obtain as a consequence:

\begin{thm}\label{thm-pic-bipic}
For any field $k$ and for fixed partitions $\gamma\subseteq\beta$,
there is a one-to-one correspondence between the set of isomorphism classes
of objects in ${}_2{\mathcal S}_{\gamma}^\beta$ and the set of arc diagrams 
in $_2\mathcal D_\gamma^\beta$. \qed
\end{thm}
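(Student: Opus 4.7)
The plan is to combine Krull--Schmidt for $\mathcal S_2(k)$ with an explicit reconstruction of the summand multiplicities from the pair $(\Delta,\beta)$. Since $\mathcal S_2(k)$ is a Krull--Schmidt category and every indecomposable is either a picket $P_\ell^m$ with $\ell\in\{0,1,2\}$ or a bipicket $B_2^{m,r}$ with $1\le r\le m-2$, as recalled in Section~\ref{section-pickets}, any object $X\in{}_2\mathcal S_\gamma^\beta$ is determined up to isomorphism by the multiplicities $b_r^m$ of the bipickets and $c_\ell^m$ of the pickets.

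First I would verify that $X\mapsto\Delta(X)$ is well-defined. The prescribed decomposition pairs each $P_2^m$ with a $P_0^{m-1}$ whenever both are available, and the condition $w_j\notin\{m_i-1\}$ means that after pairing no further pair can be formed. This is equivalent to choosing $\min(c_2^m,c_0^{m-1})$ paired summands for each $m$, a procedure that depends only on the isomorphism class of $X$; the arc diagram is then assembled entry by entry from the table, and is thus well-defined.

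Injectivity is the heart of the argument. Given $\Delta\in{}_2\mathcal D_\gamma^\beta$ and the partition $\beta$, I would recover the multiplicities by reading off the diagram: $b_r^m$ is the number of arcs from $m$ to $r$ for $r\le m-2$; $c_1^m$ is the number of poles at $m$; $c_2^m$ is the number of circles at $m$ plus the number of arcs from $m$ to $m-1$; and finally
\[
c_0^m \;=\; \mathrm{mult}_m(\beta) \;-\; (\text{arcs incident at } m) \;-\; (\text{poles at } m) \;-\; (\text{circles at } m).
\]
This formula is correct because every indecomposable summand other than $P_0^m$ either is drawn in $\Delta$ with some incidence at position $m$ or does not contribute to $\mathrm{mult}_m(\beta)$. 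Consistency with $\gamma$ is automatic since by hypothesis $\Delta\in{}_2\mathcal D_\gamma^\beta$ arises from some object in ${}_2\mathcal S_\gamma^\beta$, and Krull--Schmidt then forces the recovered multiplicities to reproduce $\gamma$. Surjectivity of the map is built into the definition of ${}_2\mathcal D_\gamma^\beta$.

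The delicate point, and the only place where fixing the partition type is essential, is the interpretation of an ``arc from $m$ to $m-1$'': it must represent a paired $P_2^m\oplus P_0^{m-1}$ summand rather than a circle at $m$ (an isolated $P_2^m$) together with a hidden $P_0^{m-1}$. The pairing convention combined with the knowledge of $\beta$ rules out this ambiguity, and this is precisely what the preceding remark on $B_2^{5,2}$ versus $B_2^{5,2}\oplus P_0^3$ is highlighting; without fixing $\beta,\gamma$ the recovery of $c_0^m$ would fail.
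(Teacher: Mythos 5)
Your overall strategy---Krull--Schmidt for $\mathcal S_2(k)$ together with an explicit reconstruction of all summand multiplicities from the pair $(\Delta,\beta)$---is exactly the argument the paper leaves implicit (the theorem is stated there as an immediate consequence of the table, the pairing convention and the remark about $B_2^{5,2}$ versus $B_2^{5,2}\oplus P_0^3$), and every step except one is carried out correctly. The step that fails is your formula for $c_0^m$. An arc from $m+1$ to $m$ represents a paired summand $P_2^{m+1}\oplus P_0^{m}$; it is incident at $m$, and the part $m$ it contributes to $\beta$ comes precisely from its $P_0^m$ constituent. Your formula subtracts this incidence and therefore fails to count that copy of $P_0^m$: it returns the number of \emph{unpaired} $P_0^m$ summands, that is, $c_0^m$ minus the number of arcs from $m+1$ to $m$. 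Concretely, for $X=P_2^3\oplus P_0^2$ (a single arc from $3$ to $2$, with $\beta=(3,2)$) your formula gives $c_0^2=1-1-0-0=0$, so you would reconstruct $P_2^3$, whose ambient partition is $(3)$ rather than $(3,2)$. Your justifying sentence (``every indecomposable summand other than $P_0^m$ either is drawn with an incidence at $m$ or does not contribute to $\mathrm{mult}_m(\beta)$'') is true but not sufficient: it overlooks that a $P_0^m$ summand can itself be drawn with an incidence at $m$, namely when it is absorbed into an arc from $m+1$ to $m$.

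The repair is immediate: exclude those arcs from the subtraction,
\[
c_0^m=\mathrm{mult}_m(\beta)-\bigl(\text{arcs incident at $m$ other than arcs from $m+1$ to $m$}\bigr)-(\text{poles at $m$})-(\text{circles at $m$}),
\]
after which the recovered multiplicities reproduce $\beta$ and the injectivity argument goes through. The remaining parts of your proposal---well-definedness of $\Delta(X)$ via maximality of the pairing $\min(c_2^m,c_0^{m-1})$, surjectivity by the definition of ${}_2\mathcal D_\gamma^\beta$, and the observation that only $\beta$ (not $\gamma$) is actually needed for the recovery---are correct and match the intended argument.
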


\begin{defin}
We say two invariant subspaces $Y,Z\in {}_2{\mathcal S}_{\gamma}^\beta$
are in {\it arc-order,} in symbols $Y\arcleq Z$, if $\Delta(Y)\arcleq \Delta(Z)$
holds.
\end{defin}

\section{The degeneration order}\label{sec-partial-orders}

Assume that $k$ is an algebraically closed field.
For natural numbers $a\leq b$
the representation space of the quiver $Q$ corresponding to the dimension
vector $(a,b)$ is the affine variety
$${}^a{\mathbb V}^b=\mathbb{M}_a(k)\times \mathbb{M}_{a\times b}(k)\times \mathbb{M}_b(k),$$
where $\mathbb{M}_{a\times b}(k)$ is the set of $a\times b$ matrices with coefficients in $k$
and $\mathbb{M}_{a}(k)=\mathbb{M}_{a\times a}(k)$.
We work with the Zariski topology and with the  induced topology for all subsets of ${}^a{\mathbb V}^b$. 
Fix partitions $\gamma\subseteq\beta$ such that 
$|\beta|=b$ and $|\gamma|=b-a$.
We define ${\mathbb V}_\gamma^\beta$ as the subset of  ${}^a{\mathbb V}^b$ 
consisting of all points $F=(f_1,f,f_2)$, 
such that $f_1^a=0,\, f_2^b=0,\, ff_1-f_2 f=0$, $f_2$ has the Jordan type $\beta$, 
$f$ has maximal rank and $\Coker f$ has type $\gamma$. Moreover, by $\mrep$ denote the subset
of ${\mathbb V}_\gamma^\beta$ consisting of all points $(f_1,f,f_2)$ such that $f_1$ has 
nilpotency index less than or equal to $2$,
that is, the Jordan blocks in the Jordan normal form for $f_1$ have size at most $2$.
On ${\mathbb V}_\gamma^\beta$ (resp. $\mrep$), there acts the algebraic group 
$G=Gl(a,b)=Gl(a)\times Gl(b)$ via $(g,h)\cdot (f_1,f,f_2)=(g f_1 g^{-1}, hfg^{-1}, hf_2h^{-1})$. 

For a~point $F\in {\mathbb V}_\gamma^\beta$, denote by $\mathcal{O}_F$ the orbit of $F$ under action of $G$. \smallskip 
 
\begin{rem}
  Note that the $G$ - orbits in ${\mathbb V}_\gamma^\beta$ (resp.\ in $\mrep$) are in $1 - 1$ - correspondence with the isomorphism classes of objects 
 in ${\mathcal S}_\gamma^\beta$ (resp. ${}_2{\mathcal S}_\gamma^\beta$).
\end{rem}\smallskip

\begin{defin}
Suppose that the points $F, H\in {}_2{\mathbb V}_{\gamma}^\beta(k)$ correspond to
objects 
$Y=(N_\alpha,N_\beta,f)$ and $Z=(N_{\tilde \alpha},N_\beta,h)$ 
in ${}_2{\mathcal S}_{\gamma}^\beta(k)$.

The relation $Y \leq_{\sf deg} Z$ is defined to 
hold if and only if  $\mathcal{O}_H \subseteq \overline{\mathcal{O}_F}$ 
where $\overline{\mathcal{O}_F}$ is the closure of $\mathcal{O}_F$ in ${}_2{\mathbb V}_{\gamma}^\beta(k)$.
\end{defin}

\subsection{The algebraic orders}\label{sec-four-partial}

Our aim is to prove that the degeneration order and the arc order are equivalent.
In the proof we use the following classical algebraic orders.
Let $Y=(N_\alpha,N_\beta,f)$ and $Z=(N_{\tilde \alpha},N_\beta,g)$ be objects
in $\mathcal S_{\gamma}^\beta(k)$.

\begin{itemize}
 \item The relation $Y \leq_{\sf ext} Z$  holds if there exist
a natural number $s$,
objects  $M_i$, $U_i$, $V_i$ in $\mathcal{S}(k)$ and short exact
sequences $0\to U_i\to M_i\to V_i\to 0$ in~$\mathcal{S}(k)$
such that $Y\cong M_1$, $U_i\oplus V_i\cong M_{i+1}$ for $1\leq i\leq s$,
and $Z\cong M_{s+1}$.

\item The relation $Y\leq_{\sf hom} Z$  holds
if $$[X,Y]\leq [X,Z]$$ for any object $X$ in $\mathcal{S}(k)$.
Here we write $[X,Y]=\dim_k\Hom_{\mathcal{S}}(X,Y)$ for $\mathcal{S}$-modules $X,Y$.
By \cite[Lemma 3.3]{kos-sch}, the relation $Y\leq_{\sf hom} Z$  holds
if $[X,Y]\leq [X,Z]$ for every object $X$ in $\mathcal{S}_2(k)$.

\end{itemize}

\subsection{The partial orders are equivalent}

We can present the proof of Theorem~\ref{thm-first-main}, up to two results about the arc-order which are
shown in the next section.

We restate the theorem to include statements about arbitrary fields.

\begin{thm}\label{thm-main} Let $k$ be an~arbitrary field and assume that
$Y,Z\in {}_2{\mathcal S}_{\gamma}^\beta$.
The following conditions are equivalent
\begin{enumerate}
 \item $Y\leq_{\sf arc} Z$,
 \item $Y\leq_{\sf ext} Z$,
 \item $Y\leq_{\sf hom} Z$.
\end{enumerate}
If in addition the field $k$ is algebraically closed,
then the conditions stated above are equivalent with
\begin{enumerate}
 \item[(4)] $Y\leq_{\sf deg} Z$.
\end{enumerate}
\end{thm}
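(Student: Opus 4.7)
The plan is to establish the cycle $(1)\Rightarrow(2)\Rightarrow(3)\Rightarrow(1)$ for an arbitrary field $k$, and then to splice condition $(4)$ into it via $(2)\Rightarrow(4)\Rightarrow(3)$ when $k$ is algebraically closed.

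First I would prove $(1)\Rightarrow(2)$ by realizing each of the five arc moves as a single short exact sequence in $\mathcal S(k)$. For moves {\bf (A)}--{\bf (D)}, which leave the partition $\alpha$ unchanged, the required extensions are analogous to the constructions carried out in \cite{kos-sch}. The new move {\bf (E)} converts the bipicket $B_2^{m,r}$ into $P_1^m\oplus P_1^r$ and is realized by the short exact sequence
$$
0\;\longrightarrow\; P_1^m\;\longrightarrow\; B_2^{m,r}\;\longrightarrow\; P_1^r\;\longrightarrow\; 0,
$$
whose exactness follows at once from the explicit description of $B_2^{m,r}$ given in Section~\ref{section-pickets}. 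A local arc move at fixed positions of a diagram lifts to an ext-step by taking the remainder of the diagram as a common direct summand, so the composition of arc moves witnessing $Y\leq_{\sf arc}Z$ assembles into an ext-chain from $Y$ to $Z$.

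The implication $(2)\Rightarrow(3)$ is immediate: applying the left-exact functor $\Hom_{\mathcal S}(X,-)$ to a short exact sequence $0\to U\to M\to V\to 0$ yields $[X,M]\leq[X,U]+[X,V]=[X,U\oplus V]$ for every $X\in\mathcal S(k)$, and $\leq_{\sf hom}$ is transitive. The implication $(3)\Rightarrow(1)$ is the heart of the theorem and the main obstacle; I would defer its proof to Section~\ref{chapter-hom-tab}. The strategy, adapted from \cite{kos-sch}, is to design an algorithm which, by comparing the hom-numbers $[X,Y]$ and $[X,Z]$ for $X$ running over $\ind\mathcal S_2$, produces an explicit finite sequence of moves of types {\bf (A)}--{\bf (E)} that transforms $\Delta(Z)$ into $\Delta(Y)$. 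The genuinely new combinatorial content, compared with \cite{kos-sch}, is the detection purely from the hom-inequalities of the positions at which move {\bf (E)} must be invoked --- these are precisely the positions where $\alpha(Y)\ne\alpha(Z)$.

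Finally, for algebraically closed $k$ I would splice in condition $(4)$ via $(2)\Rightarrow(4)$ and $(4)\Rightarrow(3)$. The former is classical: a short exact sequence $0\to U\to M\to V\to 0$ gives rise to a one-parameter family of extensions $\{tE\}_{t\in k}$ with generic fibre isomorphic to $M$ and special fibre at $t=0$ isomorphic to $U\oplus V$; because forming subobject-plus-quotient preserves both the ambient Jordan type $\beta$ and the quotient type $\gamma$, this family lies entirely inside ${}_2\mathbb V_\gamma^\beta$, witnessing $M\leq_{\sf deg}U\oplus V$. The converse $(4)\Rightarrow(3)$ invokes upper semi-continuity of $F\mapsto\dim_k\Hom_{\mathcal S}(X,M_F)$ on the representation variety: $\mathcal O_Z\subseteq\overline{\mathcal O_Y}$ forces $[X,Y]\leq[X,Z]$ for every $X$.
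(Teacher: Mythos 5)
Your proposal is correct and follows essentially the same route as the paper: the cycle $(1)\Rightarrow(2)\Rightarrow(3)\Rightarrow(1)$ with the arc-to-ext step realized by the short exact sequences attached to each move (for {\bf (E)}, the same sequence $0\to P_1^m\to B_2^{m,r}\to P_1^r\to 0$ as in Section~\ref{section-operations}), the hard implication $\leq_{\sf hom}\Rightarrow\leq_{\sf arc}$ deferred to the algorithm of Section~\ref{chapter-hom-tab}, and $(4)$ spliced in via $\leq_{\sf ext}\Rightarrow\leq_{\sf deg}\Rightarrow\leq_{\sf hom}$ exactly as the paper does by citing Bongartz and Riedtmann. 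The only cosmetic difference is that you sketch the Riedtmann one-parameter family and the semicontinuity argument explicitly where the paper simply invokes the references.
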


\begin{proof}
Applying the functor
$\Hom_k(X,-)$ to the short exact sequences given in the definition of $\leq_{\sf ext}$,
it is easy to see that $Y\leq_{\sf ext} Z$ implies $Y\leq_{\sf hom} Z.$

If $k$ is an~algebraically closed field, then by \cite{bongartz,riedtmann} (see \cite[Section 3]{kos-sch} for details),
we have
$$Y\leq_{\sf ext} Z \;\Longrightarrow\; Y\leq_{\sf deg} Z\;\Longrightarrow\; Y\leq_{\sf hom} Z.$$
For any field $k$, the implications
$$Y\leq_{\sf hom} Z \;\Longrightarrow\; Y\leq_{\sf arc} Z\;\Longrightarrow\; Y\leq_{\sf ext} Z$$
follow from Theorem \ref{theorem-hom-arc} and Lemma \ref{lem-tab-ext}, respectively.
\end{proof}

\section{The category $\mathcal S_2(k)$}\label{section-dim-hom}

In this section we shortly recall properties of the category $\mathcal S_2(k)$.
Denote by $\mathcal S_2^n(k)$ the full subcategory of $\mathcal S_2(k)$
of all objects where the operator acts with nilpotency index at most $n$
on the ambient space.
We have seen in \cite[Section~3.2]{s-b} that $\mathcal S_2^n(k)$ is an
exact Krull-Remak-Schmidt category with Auslander-Reiten sequences.

The Auslander-Reiten quiver for each of the categories $\mathcal S_2^n(k)$
is obtained by identifying the objects of type $P_1^r$ on the left with their
counterparts on the
right in the following picture, thus yielding a Moebius band.

$$
\beginpicture\setcoordinatesystem units <.85cm,.85cm>
\setcounter{boxsize}{2}
\put {} at 0 6.5
\put {} at 0 -.8
\setlength\unitlength{1mm}
\put{$P_1^1$} at 0 6
\put{$P_1^2$} at 1 4
\put{$P_0^1$} at 2 6
\put{$P_2^2$} at 2 4.5
\put{$P_1^3$} at 2 3
\put{$B_2^{3,1}$} at 3 4
\put{$P_1^4$} at 3 2
\put{$P_2^3$} at 4 6
\put{$P_0^2$} at 4 4.5
\put{$B_2^{4,1}$} at 4 3
\put{$P_1^5$} at 4 1
\put{$B_2^{4,2}$} at 5 4
\put{$B_2^{5,1}$} at 5 2
\put{$P_0^3$} at 6 6
\put{$P_2^4$} at 6 4.5
\put{$B_2^{5,2}$} at 6 3
\put{$P_1^n$} at 6 -1
\put{$B_2^{5,3}$} at 7 4
\put{$B_2^{n,1}$} at 7 0
\put{$P_2^5$} at 8 6
\put{$P_0^4$} at 8 4.5
\put{$B_2^{n,2}$} at 8 1
\put{$P_1^1$} at 8 -1
\put{$B_2^{n,3}$} at 9 2
\put{$P_1^2$} at 9 0
\put{$P_1^3$} at 10 1
\put{$B_2^{n,n\text{-}2}\;$} at 11 4
\put{$P_1^{n\text-2}$} at 12 3
\put{$P_2^n$} at 12 4.5
\put{$P_0^{n\text-1}$} at 12 6
\put{$P_0^n$} at 13 7
\put{$P_1^{n\text{-}1}$} at 13 5
\put{$P_1^n$} at 14 6
\arr{.3 5.4}{.7 4.6}
\arr{1.3 4.6}{1.7 5.4}
\arr{1.3 4.15}{1.7 4.35}
\arr{1.3 3.7}{1.7 3.3}
\arr{2.3 5.4}{2.7 4.6}
\arr{2.3 4.35}{2.7 4.15}
\arr{2.3 3.3}{2.7 3.7}
\arr{2.3 2.7}{2.7 2.3}
\arr{3.3 4.6}{3.7 5.4}
\arr{3.4 4.2}{3.7 4.35}
\arr{3.3 3.7}{3.7 3.3}
\arr{3.3 2.3}{3.7 2.7}
\arr{3.3 1.7}{3.7 1.3}
\arr{4.3 5.4}{4.7 4.6}
\arr{4.3 4.35}{4.7 4.15}
\arr{4.4 3.4}{4.7 3.7}
\arr{4.3 2.7}{4.7 2.3}
\arr{4.3 1.3}{4.7 1.7}
\arr{4.3 .7}{4.7 .3}
\arr{5.3 4.6}{5.7 5.4}
\arr{5.4 4.2}{5.7 4.35}
\arr{5.3 3.7}{5.7 3.3}
\arr{5.3 2.3}{5.7 2.7}
\arr{5.3 1.7}{5.7 1.3}
\arr{5.3 -.3}{5.7 -.7}
\arr{6.3 5.4}{6.7 4.6}
\arr{6.3 4.35}{6.7 4.15}
\arr{6.4 3.4}{6.7 3.7}
\arr{6.3 2.7}{6.7 2.3}
\arr{6.3 -.7}{6.7 -.3}
\arr{6.3 .7}{6.7 .3}
\arr{7.3 4.6}{7.7 5.4}
\arr{7.4 4.2}{7.7 4.35}
\arr{7.3 3.7}{7.7 3.3}
\arr{7.3 1.7}{7.7 1.3}
\arr{7.3 .3}{7.7 .7}
\arr{7.3 -.3}{7.7 -.7}
\arr{8.3 2.7}{8.7 2.3}
\arr{8.3 1.3}{8.7 1.7}
\arr{8.3 .7}{8.7 .3}
\arr{8.3 -.7}{8.7 -.3}
\arr{9.3 2.3}{9.7 2.7}
\arr{9.3 1.7}{9.7 1.3}
\arr{9.3 .3}{9.7 .7}
\arr{10.3 3.3}{10.7 3.7}
\arr{10.3 1.3}{10.7 1.7}
\arr{11.5 4.25}{11.7 4.35}
\arr{11.3 4.6}{11.7 5.4}
\arr{11.3 3.7}{11.7 3.3}
\arr{11.3 2.3}{11.7 2.7}
\arr{12.3 3.6}{12.7 4.4}
\arr{12.3 5.7}{12.7 5.3}
\arr{12.3 6.3}{12.7 6.7}
\arr{13.3 6.7}{13.7 6.3}
\arr{13.3 5.3}{13.7 5.7}
\setdots<2pt>
\plot 0.3 6  1.7 6 /
\plot 2.3 6  3.7 6 /
\plot 4.3 6  5.7 6 /
\plot 6.3 6  7.7 6 /
\plot 8.3 6  9 6 /
\plot 11 6  11.7 6 /
\plot 2.3 4.5  3.7 4.5 /
\plot 4.3 4.5  5.7 4.5 /
\plot 6.3 4.5  7.7 4.5 /
\plot 8.3 4.5  9 4.5 /
\plot 11 4.5  11.7 4.5 /
\plot 6.3 -1  7.7 -1 /
\multiput{$\ddots$} at 8 3  7 2  6 1  5 0 /
\multiput{$\cdots$} at 10 6  10 4.5 /
\multiput{$\cdot$} at 9.9 2.9  10 3  10.1 3.1     10.9 1.9  11 2  11.1 2.1 /
\endpicture
$$

For each pair $(X,Y)$ of indecomposable objects in $\mathcal S_2(k)$ we determine
in the table below
the dimension of the $k$-space $\Hom_{\mathcal S}(X,Y)$, see \cite[Lemma 4]{sch} and \cite{kos-sch}.

\begin{figure}[tbh]
\begin{center}
\begin{tabular}{|r|@{}c@{}|@{}c@{}|@{}c@{}|@{}c@{}|}\hline
\multicolumn{5}{|c|}{\bf Dimensions of Spaces $\Hom(X,Y)$, $X,Y\in \ind\mathcal S_2(k)$}\\
\hline
  $X$ &  $Y=P_0^m$  &  $P_2^m$  &  $B_2^{m,r}$  &  $P_1^m$  \\
\hline \hline
$P_0^\ell$ & $\min\{\ell,m\}$ & $\min\{\ell,m\}$
            & $\begin{array}{l} \min\{\ell,m\} \\ + \min\{\ell,r\} \end{array}$
            & $\min\{\ell,m\}$ \\
\hline
$P_2^\ell$      & $\min\{\ell-2,m\}$
               & $\min\{\ell,m\}$
               & $\begin{array}{l} \min\{\ell-1,m\} \\ +\min\{\ell-1,r\} \end{array}$
               & $\min\{\ell-1,m\}$ \\
\hline
$B_2^{\ell,t}$ & $\begin{array}{l}  \min\{\ell-1,m\} \\ + \min\{t-1,m\} \end{array}$
             & $\begin{array}{l}  \min\{\ell,m\} \\ + \min\{t,m\} \end{array}$
             & $\begin{array}{l}  \min\{\ell-1,m\} \\ + \min\{t,m\} \\
                        + \min\{\ell-1,r\} \\ + \min\{t,r\} \\-  {\bf 1}\{\ell>m\,\text{and}\,t\leq r\} \end{array}$
             & $\begin{array}{l} \min\{\ell-1,m\} \\ + \min\{t,m\} \end{array}$ \\
\hline
$P_1^\ell$ & $\min\{\ell-1, m\}$
          & $\min\{\ell,m\}$
          & $\begin{array}{l}  \min\{\ell,m\} \\ +  \min\{\ell-1,r\} \end{array}$
          & $\min\{\ell,m\}$ \\
\hline
\end{tabular}
\end{center}
\end{figure}

We denote by {\bf 1} the {\it characteristic function} corresponding to the property
specified in parantheses.

For the sake of simplicity we use the
notation
$$B_2^{m,m-1}\;=\;P_2^m\oplus P_0^{m-1}$$
for $m\geq 2$.
We observe that the notation is consistent with the formulae above.

\subsection{How operations change the hom spaces}\label{section-operations}

Throughout this section, $Y,Z\in\mathcal S_2$ will be objects of the same partition type $(\beta,\gamma)$.
Following \cite{kos-sch}, we introduce two matrices, the multiplicity matrix $\delta M=\delta M(Y,Z)$
and the hom matrix $\delta H=\delta H(Y,Z)$; in each case the indexing set
is the set of isomorphism types of indecomposable objects in $\mathcal S_2$.
The matrices are defined as follows:
$$\delta M_X=\mu_X(Z)-\mu_X(Y), \quad\text{and}\quad
  \delta H_X=[X,Z]_{\mathcal S}-[X,Y]_{\mathcal S},$$
where $[X,Z]_{\mathcal S}=\dim\Hom_{\mathcal S}(X,Z)$ and
where $\mu_X(Z)$ denotes the number of direct summands of $Z$ that are isomorphic to $X$.

We visualize the matrices by indicating the value at $X\in\ind\mathcal S_2$
in the position of $X$ in the Auslander-Reiten quiver for $\mathcal S_2^n$,
with $n$ large enough.
We sketch this quiver as follows:  The modules on the top line are
the $P_2^m$, those on the second line are the $P_0^r$;
the modules in the triangle have type $B_2^{m,r}$.
The modules $P_1^r$ are repeated twice, on the diagonal at the left
and on the antidiagonal at the right.

$$
\beginpicture\setcoordinatesystem units <.85cm,.85cm>
\multiput{} at 0 0  4 4 /
\plot -.3 3.9  3.5 3.9 /
\plot -.3 3.45  3.5 3.45 /
\plot 1.7 .45  0 3  3.5 3 /
\plot 2 0  4.3 3.45 / 
\plot -.9 3.45  1.4 0 /
\multiput{$\scriptstyle\bullet$} at 2 0  2.3 .45  2.6 .9  2.9  1.35
                                 0 3  .3 2.55  .6 2.1  .9 1.65  .6 3  1.2 3  1.8 3
                                 .9 2.55  1.5 2.55  1.2 2.1
                                 -.3 3.45  .3 3.45  .9 3.45  1.5 3.45  2.1 3.45
                                 -.3 3.9  .3 3.9  .9 3.9  1.5 3.9  2.1 3.9
                                 -.9 3.45  -.6 3  -.3 2.55  0 2.1  .3 1.65  .6 1.2 /
\multiput{$\scriptstyle P_1^1$} at 2.3 -.2  -1.2 3.45 /
\multiput{$\scriptstyle P_1^3$} at 2.9 .7   -.6 2.45 /
\put{$\scriptstyle B_2^{3,1}$} at -.15 3.1
\put{$\scriptstyle B_2^{5,1}$} at .8 2.3
\put{$\scriptstyle B_2^{6,4}$} at 2.1 2.8
\put{$\scriptstyle B_2^{6,2}$} at 1.5 1.9
\put{$\scriptstyle P_0^1$} at -.55 3.6
\put{$\scriptstyle P_2^2$} at -.3 4.2
\put{$\scriptstyle P_0^5$} at 2.4 3.65
\put{$\scriptstyle P_2^6$} at 2.1 4.2
\endpicture
$$


%

We determine how the matrices change under the operation {\bf(E)} on the arc diagram.

%
%

Suppose $Z$ is obtained from $Y$ by a transformation
$$
{\bf (E)}:\qquad
\begin{picture}(20,8)(0,3)
\put(0,4){\line(1,0){20}}
\multiput(3,3)(12,0)2{$\bullet$}
\put(10,4){\oval(12,12)[t]}
\put(4,1){\makebox[0mm]{$\scriptstyle m$}}
\put(16,1){\makebox[0mm]{$\scriptstyle r$}}
\end{picture}
\;
\leq_{\sf arc}
\;
\begin{picture}(20,8)(0,3)
\put(0,4){\line(1,0){20}}
\multiput(3,3)(12,0)2{$\bullet$}
\put(4,4){\line(0,1){10}}
\put(16,4){\line(0,1){10}}
\put(4,1){\makebox[0mm]{$\scriptstyle m$}}
\put(16,1){\makebox[0mm]{$\scriptstyle r$}}
\end{picture}
$$
where $m>r+1$.
Recall that the arc in the diagram for $Y$ represents
the bipicket $B_2^{m,r}$ which is replaced by
the summands $P_1^{m}$ and $P_1^{r}$ in $Z$ that give rise
to the corresponding poles in the diagram for $Z$.

Thus, the multiplicity matrix is as follows.
$$
\beginpicture\setcoordinatesystem units <.6cm,.6cm>
\multiput{} at 0 0  4.5 4.5 /

\put{ } at -3 2
\plot -.3 3.9  3.5 3.9 /
\plot -.3 3.45  3.5 3.45 /
\plot 1.7 .45  0 3  3.5 3 /
\plot 2 0  4.3 3.45 /
\plot 1.4 0  -.9 3.45 /
\setdots<2pt>
\plot .86 .8  2.33 3 /
\plot 2.84 1.25  1.67 3 /
\put{$\ssize m$} at .6 .65
\put{$\ssize r$} at 3.0 1.1
\multiput{$\scriptstyle\bullet$} at  .86 .8 2 2.5  2.84 1.25 /
\multiput{$\ssize 1$} at 2.84 1.55 .86 1.1 /
\multiput{$\ssize \text-1$} at 2.3 2.5 /
\endpicture
$$

Note that the marked points correspond to a short exact sequence
$$0\longrightarrow P_1^{m}\longrightarrow B_2^{m,r}\longrightarrow P_1^{r}\longrightarrow 0$$
which proves the implication
$$Y\arcleq Z\;\Longrightarrow\;Y\extleq Z.$$

Next we determine the Hom matrix $\delta H=\delta H(B_2^{m,r},P_1^{m}\oplus  P_1^{r})$.
By Lemma~\ref{lemma-zeros}, we have $\delta H_{P_0^\ell}=0=\delta H_{P_2^\ell}$.
We compute the remaining numbers using the table given in the 
first part of Section~\ref{section-dim-hom}: $$\delta H_{P_1^\ell}=1, \text{ where } \ell\leq r$$
$$\delta H_{B_2^{\ell,t}}={\bf 1}\{m<\ell\,\text{and}\,t\leq r\}.$$
Thus, the only indecomposables $X\in\mathcal S_2$
for which $\delta H_X\neq 0$ are the $B_2^{\ell,t}$ where
$m<\ell$ and $t\leq r$ and $P_1^{\ell}$ where $\ell\leq r$.  For each such module $X$ we have $\delta H_X=1$.
They lie in the shaded region in the diagram below.

$$
\beginpicture\setcoordinatesystem units <.6cm,.6cm>
\multiput{} at 0 0  4.5 4.5 /
\put{ } at -3 2
\plot -.3 3.9  3.5 3.9 /
\plot -.3 3.45  3.5 3.45 /
\plot 1.7 .45  0 3  3.5 3 /
\plot 2 0  4.3 3.45 /
\plot 1.4 0  -.9 3.45 /
\setdots<2pt>
\plot .86 .8  2.33 3 /
\plot 2.84 1.25  1.67 3 /
\put{$\ssize r$} at 0 1.55
\put{$\ssize m$} at .6 .65
\put{$\ssize r$} at 3.0 1.1
\multiput{$\scriptstyle\bullet$} at  .86 .8 2 2.5  2.84 1.25 .26 1.7 /
\setshadegrid span <.3mm>
\vshade 1.1 .8 .8 <,z,,>
		  1.9 -.25 2.15  <z,z,,>
        2.2 .15 2.5  <z,z,,>
        2.94 1.4 1.4 /
\vshade  -1. 3.2 3.2 <,z,,>
         -.9 3.45 3.75 <z,z,,>
         .36 1.55 1.85 <z,z,,>
         .38 1.55 1.85 /
\endpicture$$

We have seen in \cite{kos-sch}  how the matrices are changing for the operations 
{\bf (A)}-{\bf (D)} on the arc diagram. We present these matrices in the following table:

\begin{center}
\renewcommand{\arraystretch}{2}
\begin{tabular}{|r|@{} c | c |}
\hline
\multicolumn{3}{|c|}{\bf Matrices for moves {\bf(A)}-{\bf(D)}}\\
\hline
\multirow{4}{*}{{\bf(A)}} &
\multicolumn{2}{|c|}{$
\begin{picture}(20,8)(0,3)
\put(0,4){\line(1,0){20}}
\multiput(3,3)(4,0)4{$\bullet$}
\put(10,4){\oval(4,4)[t]}
\put(10,4){\oval(12,12)[t]}
\put(4,1){\makebox[0mm]{$\scriptstyle m$}}
\put(8,1){\makebox[0mm]{$\scriptstyle n$}}
\put(12,1){\makebox[0mm]{$\scriptstyle r$}}
\put(16,1){\makebox[0mm]{$\scriptstyle s$}}
\end{picture}
\;
\leq_{\sf arc}
\;
\begin{picture}(20,8)(0,3)
\put(0,4){\line(1,0){20}}
\multiput(3,3)(4,0)4{$\bullet$}
\put(8,4){\oval(8,8)[t]}
\put(12,4){\oval(8,8)[t]}
\put(4,1){\makebox[0mm]{$\scriptstyle m$}}
\put(8,1){\makebox[0mm]{$\scriptstyle n$}}
\put(12,1){\makebox[0mm]{$\scriptstyle r$}}
\put(16,1){\makebox[0mm]{$\scriptstyle s$}}
\end{picture}$, where $m>n>r>s$}
\\
\cline{2-3}&$\delta M(Y,Z)$&$\delta H(Y,Z)$
\\
\cline{2-3}
&$
\beginpicture\setcoordinatesystem units <.6cm,.6cm>
\multiput{} at 0 0  4.5 4.5 /
\put{ } at -3 2
\plot -.3 3.9  3.5 3.9 /
\plot -.3 3.45  3.5 3.45 /
\plot 1.7 .45  0 3  3.5 3 /
\plot 2 0  4.3 3.45 /
\plot 1.4 0  -.9 3.45 /
\setdots<2pt>
\plot .84 .8  2.33 3 /
\plot 1.2 .3  3 3 /
\plot 2.5 .75  1 3 /
\plot 2.84 1.25  1.67 3 /
\put{$\ssize m$} at 1 .15
\put{$\ssize s$} at 2.7 .6
\put{$\ssize r$} at 3 1.1
\multiput{$\scriptstyle\bullet$} at 2 1.5  1.67 2  2.33 2  2 2.5 /
\multiput{$\ssize 1$} at 1.4 2  2.6 2 /
\multiput{$\ssize -1$} at 2 1.3  2 2.7 /
\endpicture
$
&
$
\beginpicture\setcoordinatesystem units <.6cm,.6cm>
\multiput{} at 0 0  4.5 4.5 /
\put{ } at -3 2
\plot -.3 3.9  3.5 3.9 /
\plot -.3 3.45  3.5 3.45 /
\plot 1.7 .45  0 3  3.5 3 /
\plot 2 0  4.3 3.45 /
\plot 1.4 0  -.9 3.45 /
\setdots<2pt>
\plot .84 .8  2.33 3 /
\plot 1.2 .3  3 3 /
\plot 2.5 .75  1 3 /
\plot 2.84 1.25  1.67 3 /
\put{$\ssize n$} at .6 .65
\put{$\ssize m$} at 1 .15
\put{$\ssize s$} at 2.7 .6
\put{$\ssize r$} at 3 1.1
\multiput{$\scriptstyle\bullet$} at 2 1.5  1.67 2  2.33 2  2 2.5 /
\setshadegrid span <.3mm>
\vshade 1.87 2 2 <,z,,>
        2.2 1.5 2.5  <z,z,,>
        2.53 2 2  /
\endpicture
$\\
\cline{2-3}
&\multicolumn{2}{|c|}{$0\longrightarrow B_2^{n,s}\longrightarrow B_2^{m,s}\oplus B_2^{n,r}\longrightarrow B_2^{m,r}\longrightarrow 0$}
\\
\hline
\hline
\multirow{4}{*}{{\bf(A')}} &
\multicolumn{2}{|c|}{$
\begin{picture}(20,8)(0,3)
\put(0,4){\line(1,0){20}}
\multiput(3,3)(4,0)4{$\bullet$}
\put(10,4){\oval(4,4)[t]}
\put(10,4){\oval(12,12)[t]}
\put(4,1){\makebox[0mm]{$\scriptstyle m$}}
\put(8,1){\makebox[0mm]{$\scriptstyle n$}}
\put(12,1){\makebox[0mm]{$\scriptstyle r$}}
\put(16,1){\makebox[0mm]{$\scriptstyle s$}}
\end{picture}
\;
\leq_{\sf arc}
\;
\begin{picture}(20,8)(0,3)
\put(0,4){\line(1,0){20}}
\multiput(3,3)(4,0)4{$\bullet$}
\put(8,4){\oval(8,8)[t]}
\put(12,4){\oval(8,8)[t]}
\put(4,1){\makebox[0mm]{$\scriptstyle m$}}
\put(8,1){\makebox[0mm]{$\scriptstyle n$}}
\put(12,1){\makebox[0mm]{$\scriptstyle r$}}
\put(16,1){\makebox[0mm]{$\scriptstyle s$}}
\end{picture}$, where $m>n>r>s$ and $n=r+1$}
\\
\cline{2-3}&$\delta M(Y,Z)$&$\delta H(Y,Z)$
\\
\cline{2-3}
&$
\beginpicture\setcoordinatesystem units <.6cm,.6cm>
\multiput{} at 0 0  4.5 4.5 /
\put{ } at -3 2
\plot -.3 3.9  3.5 3.9 /
\plot -.3 3.45  3.5 3.45 /
\plot 1.7 .45  0 3  3.5 3 /
\plot 2 0  4.3 3.45 /
\plot 1.4 0  -.9 3.45 /
\setdots<2pt>
\plot .54 1.3  1.67 3 /
\plot 1.2 .3  3 3 /
\plot 2.5 .75  1 3 /
\plot 2.33 3  3.16 1.75 /
\put{$\ssize n$} at .3 1.15
\put{$\ssize m$} at 1 .15
\put{$\ssize s$} at 2.7 .6
\put{$\ssize r$} at 3.4 1.6
\multiput{$\scriptstyle\bullet$} at 2 1.5  1.33 2.5  2.67 2.5  2 3.45  2 3.9 /
\multiput{$\ssize 1$} at 1.1 2.5  2.9 2.5 /
\multiput{$\ssize -1$} at 2 1.3  2.3 3.6  2.3 4.05 /
\endpicture
$
&
$
\beginpicture\setcoordinatesystem units <.6cm,.6cm>
\multiput{} at 0 0  4.5 4.5 /
\put{ } at -3 2
\plot -.3 3.9  3.5 3.9 /
\plot -.3 3.45  3.5 3.45 /
\plot 1.7 .45  0 3  3.5 3 /
\plot 2 0  4.3 3.45 /
\plot 1.4 0  -.9 3.45 /
\setdots<2pt>
\plot .54 1.3  1.67 3 /
\plot 1.2 .3  3 3 /
\plot 2.5 .75  1 3 /
\plot 2.33 3  3.16 1.75 /
\put{$\ssize n$} at .3 1.15
\put{$\ssize m$} at 1 .15
\put{$\ssize s$} at 2.7 .6
\put{$\ssize r$} at 3.4 1.6
\multiput{$\scriptstyle\bullet$} at 2 1.5  1.33 2.5  2.67 2.5  2 3.45  2 3.9 /
\setshadegrid span <.3mm>
\vshade 1.53 2.5 2.5  <,z,,>
        1.97 1.85 3.15  <z,z,,>
        2.2 1.5 3.15   <z,z,,>
        2.43 1.85 3.15  <z,z,,>
        2.87 2.5 2.5 /
\endpicture
$\\
\cline{2-3}
&\multicolumn{2}{|c|}{$0\longrightarrow B_2^{n,s}\longrightarrow B_2^{m,s}\oplus P_2^n \oplus P_0^r\longrightarrow B_2^{m,r}\longrightarrow 0$}
\\
\hline
\hline
\multirow{4}{*}{{\bf(B)}} &
\multicolumn{2}{|c|}{$\begin{picture}(16,8)(0,3)
\put(0,4){\line(1,0){16}}
\multiput(3,3)(4,0)3{$\bullet$}
\put(6,4){\oval(4,4)[t]}
\put(12,4){\line(0,1){7}}
\put(4,1){\makebox[0mm]{$\scriptstyle m$}}
\put(8,1){\makebox[0mm]{$\scriptstyle r$}}
\put(12,1){\makebox[0mm]{$\scriptstyle s$}}
\end{picture}
\;
\leq_{\sf arc}
\;
\begin{picture}(16,8)(0,3)
\put(0,4){\line(1,0){16}}
\multiput(3,3)(4,0)3{$\bullet$}
\put(8,4){\oval(8,8)[t]}
\put(8,4){\line(0,1){7}}
\put(4,1){\makebox[0mm]{$\scriptstyle m$}}
\put(8,1){\makebox[0mm]{$\scriptstyle r$}}
\put(12,1){\makebox[0mm]{$\scriptstyle s$}}
\end{picture}
$
where $m>r>s$.}
\\
\cline{2-3}&$\delta M(Y,Z)$&$\delta H(Y,Z)$
\\
\cline{2-3}
&$
\beginpicture\setcoordinatesystem units <.6cm,.6cm>
\multiput{} at 0 0  4.5 4.5 /
\put{ } at -3 2
\plot -.3 3.9  3.5 3.9 /
\plot -.3 3.45  3.5 3.45 /
\plot 1.7 .45  0 3  3.5 3 /
\plot 2 0  4.3 3.45 /
\plot 1.4 0  -.9 3.45 /
\setdots<2pt>
\plot .86 .8  2.33 3 /
\plot 2.5 .75  1 3 /
\plot 2.84 1.25  1.67 3 /
\put{$\ssize s$} at 0 1.55
\put{$\ssize r$} at .3 1.1
\put{$\ssize m$} at .6 .65
\put{$\ssize s$} at 2.7 .6
\put{$\ssize r$} at 3.0 1.1
\multiput{$\scriptstyle\bullet$} at 1.67 2  2 2.5  2.5 .75  2.84 1.25  .56 1.25  .26 1.7 /
\multiput{$\ssize 1$} at 1.4 2  2.84 1.55 .66 1.5 /
\multiput{$\ssize \text-1$} at 2.3 2.5  2.2 .75  .46 1.95  /
\endpicture
$
&
$
\beginpicture\setcoordinatesystem units <.6cm,.6cm>
\multiput{} at 0 0  4.5 4.5 /
\put{ } at -3 2
\plot -.3 3.9  3.5 3.9 /
\plot -.3 3.45  3.5 3.45 /
\plot 1.7 .45  0 3  3.5 3 /
\plot 2 0  4.3 3.45 /
\plot 1.4 0  -.9 3.45 /
\setdots<2pt>
\plot .86 .8  2.33 3 /
\plot 2.5 .75  1 3 /
\plot 2.84 1.25  1.67 3 /
\put{$\ssize s$} at 0 1.55
\put{$\ssize r$} at .3 1.1
\put{$\ssize m$} at .6 .65
\put{$\ssize s$} at 2.7 .6
\put{$\ssize r$} at 3.0 1.1
\multiput{$\scriptstyle\bullet$} at 1.67 2  2 2.5  2.5 .75  2.84 1.25  .56 1.25  .26 1.7 /
\setshadegrid span <.3mm>
\vshade 1.87 2 2 <,z,,>
        2.2 1.5 2.5  <z,z,,>
        2.6  .9  1.9  <z,z,,>
        2.94 1.4 1.4 /
\vshade .36 1.55 1.55 <,z,,>
        .46 1.4 1.7 <z,z,,>
        .66 1.1 1.4 <z,z,,>
        .76 1.25 1.25 /
\endpicture
$\\
\cline{2-3}
&\multicolumn{2}{|c|}{$0\longrightarrow B_2^{m,s}\longrightarrow B_2^{m,r}\oplus P_1^s\longrightarrow P_1^r\longrightarrow 0$}
\\
\hline
\hline
\end{tabular}
\newpage
\renewcommand{\arraystretch}{2}
\begin{tabular}{|r|@{} c | c |}
\hline
\multicolumn{3}{|c|}{\bf Matrices for moves {\bf(A)}-{\bf(D)}}\\
\hline
\multirow{4}{*}{{\bf(C)}} &
\multicolumn{2}{|c|}{$\begin{picture}(20,8)(0,3)
\put(0,4){\line(1,0){20}}
\multiput(3,3)(4,0)4{$\bullet$}
\put(6,4){\oval(4,4)[t]}
\put(14,4){\oval(4,4)[t]}
\put(4,1){\makebox[0mm]{$\scriptstyle m$}}
\put(8,1){\makebox[0mm]{$\scriptstyle n$}}
\put(12,1){\makebox[0mm]{$\scriptstyle r$}}
\put(16,1){\makebox[0mm]{$\scriptstyle s$}}
\end{picture}
\;
\leq_{\sf arc}
\;
\begin{picture}(20,8)(0,3)
\put(0,4){\line(1,0){20}}
\multiput(3,3)(4,0)4{$\bullet$}
\put(8,4){\oval(8,8)[t]}
\put(12,4){\oval(8,8)[t]}
\put(4,1){\makebox[0mm]{$\scriptstyle m$}}
\put(8,1){\makebox[0mm]{$\scriptstyle n$}}
\put(12,1){\makebox[0mm]{$\scriptstyle r$}}
\put(16,1){\makebox[0mm]{$\scriptstyle s$}}
\end{picture}
$
where $m>n>r>s$.}
\\
\cline{2-3}&$\delta M(Y,Z)$&$\delta H(Y,Z)$
\\
\cline{2-3}
&$
\beginpicture\setcoordinatesystem units <.6cm,.6cm>
\multiput{} at 0 0  4.5 4.5 /
\put{ } at -3 2
\plot -.3 3.9  3.5 3.9 /
\plot -.3 3.45  3.5 3.45 /
\plot 1.7 .45  0 3  3.5 3 /
\plot 2 0  4.3 3.45 /
\plot 1.4 0  -.9 3.45 /
\setdots<2pt>
\plot .43 1.45  1.47 3 /
\plot .77 .95  2.13 3 /
\plot 1.1 .45 2.8 3 /
\plot 2.5 .75  1 3 /
\plot 2.83 1.25  1.67 3 /
\plot 3.17 1.75  2.33 3 /
\put{$\ssize r$} at .1 1.3
\put{$\ssize n$} at .5 .8
\put{$\ssize m$} at .9 .3
\put{$\ssize s$} at 2.7 .6
\put{$\ssize r$} at 3 1.1
\put{$\ssize n$} at 3.3 1.6
\multiput{$\scriptstyle\bullet$} at 1.57 2.15  2.23 2.15  1.23 2.65  2.55 2.65 /
\multiput{$\ssize 1$} at 1.37 2.15  2.43 2.15 /
\multiput{$\ssize \text-1$} at .93 2.65  2.85 2.65 /
\endpicture
$
&
$
\beginpicture\setcoordinatesystem units <.6cm,.6cm>
\multiput{} at 0 0  4.5 4.5 /
\put{ } at -3 2
\plot -.3 3.9  3.5 3.9 /
\plot -.3 3.45  3.5 3.45 /
\plot 1.7 .45  0 3  3.5 3 /
\plot 2 0  4.3 3.45 /
\plot 1.4 0  -.9 3.45 /
\setdots<2pt>
\plot .43 1.45  1.47 3 /
\plot .77 .95  2.13 3 /
\plot 1.1 .45 2.8 3 /
\plot 2.5 .75  1 3 /
\plot 2.83 1.25  1.67 3 /
\plot 3.17 1.75  2.33 3 /
\put{$\ssize r$} at .1 1.3
\put{$\ssize n$} at .5 .8
\put{$\ssize m$} at .9 .3
\put{$\ssize s$} at 2.7 .6
\put{$\ssize r$} at 3 1.1
\put{$\ssize n$} at 3.3 1.6
\multiput{$\scriptstyle\bullet$} at 1.57 2.15  2.23 2.15  1.23 2.65  2.55 2.65 /
\setshadegrid span <.3mm>
\vshade .53 1.3 1.3 <,z,,>
        .87 .8  1.8  <z,z,,>
        1.43  1.65 2.65 <z,z,,>
        1.77 2.15 2.15 /
\vshade 2.43 2.15 2.15 <,z,,>
        2.75 1.65 2.65 <z,z,,>
        2.93 1.4 2.4  <z,z,,>
        3.27 1.9 1.9  /
\endpicture
$
\\
\cline{2-3}
&\multicolumn{2}{|c|}{$0\longrightarrow B_2^{m,r}\longrightarrow B_2^{m,n}\oplus B_2^{r,s}\longrightarrow B_2^{n,s}\longrightarrow 0$}
\\
\hline
\hline
\multirow{4}{*}{{\bf(D)}} &
\multicolumn{2}{|c|}{$\begin{picture}(16,8)(0,3)
\put(0,4){\line(1,0){16}}
\multiput(3,3)(4,0)3{$\bullet$}
\put(10,4){\oval(4,4)[t]}
\put(4,4){\line(0,1){7}}
\put(4,1){\makebox[0mm]{$\scriptstyle m$}}
\put(8,1){\makebox[0mm]{$\scriptstyle r$}}
\put(12,1){\makebox[0mm]{$\scriptstyle s$}}
\end{picture}
\;
\leq_{\sf arc}
\;
\begin{picture}(16,8)(0,3)
\put(0,4){\line(1,0){16}}
\multiput(3,3)(4,0)3{$\bullet$}
\put(8,4){\oval(8,8)[t]}
\put(8,4){\line(0,1){7}}
\put(4,1){\makebox[0mm]{$\scriptstyle m$}}
\put(8,1){\makebox[0mm]{$\scriptstyle r$}}
\put(12,1){\makebox[0mm]{$\scriptstyle s$}}
\end{picture}
$
where $m>r>s$.}
\\
\cline{2-3}&$\delta M(Y,Z)$&$\delta H(Y,Z)$
\\
\cline{2-3}
&$
\beginpicture\setcoordinatesystem units <.6cm,.6cm>
\multiput{} at 0 0  4.5 4.5 /
\put{ } at -3 2
\plot -.3 3.9  3.5 3.9 /
\plot -.3 3.45  3.5 3.45 /
\plot 1.7 .45  0 3  3.5 3 /
\plot 2 0  4.3 3.45 /
\plot 1.4 0  -.9 3.45 /
\setdots<2pt>
\plot .43 1.45  1.47 3 /
\plot .77 .95  2.13 3 /
\plot 2.5 .75  1 3 /
\plot 2.83 1.25  1.67 3 /
\plot 3.17 1.75  2.33 3 /
\put{$\ssize r$} at .1 1.3
\put{$\ssize m$} at .5 .8
\put{$\ssize s$} at 2.7 .6
\put{$\ssize r$} at 3 1.1
\put{$\ssize m$} at 3.3 1.6
\multiput{$\scriptstyle\bullet$} at 1.57 2.15  1.23 2.65  .43 1.45  .77 .95  2.83 1.25  3.17 1.75 /
\multiput{$\ssize 1$} at 1.77 2.15  .43 1.75  2.63 1.25  /
\multiput{$\ssize \text-1$} at .93 2.65  1.07 .95  2.87 1.75 /
\endpicture
$
&
$
\beginpicture\setcoordinatesystem units <.6cm,.6cm>
\multiput{} at 0 0  4.5 4.5 /
\put{ } at -3 2
\plot -.3 3.9  3.5 3.9 /
\plot -.3 3.45  3.5 3.45 /
\plot 1.7 .45  0 3  3.5 3 /
\plot 2 0  4.3 3.45 /
\plot 1.4 0  -.9 3.45 /
\setdots<2pt>
\plot .43 1.45  1.47 3 /
\plot .77 .95  2.13 3 /
\plot 2.5 .75  1 3 /
\plot 2.83 1.25  1.67 3 /
\plot 3.17 1.75  2.33 3 /
\put{$\ssize r$} at .1 1.3
\put{$\ssize m$} at .5 .8
\put{$\ssize s$} at 2.7 .6
\put{$\ssize r$} at 3 1.1
\put{$\ssize m$} at 3.3 1.6
\multiput{$\scriptstyle\bullet$} at 1.57 2.15  1.23 2.65  .43 1.45  .77 .95  2.83 1.25  3.17 1.75 /
\setshadegrid span <.3mm>
\vshade .53 1.3 1.3 <,z,,>
        .87 .8  1.8  <z,z,,>
        1.43  1.65 2.65 <z,z,,>
        1.77 2.15 2.15 /
\vshade 2.83 1.55 1.55 <,z,,>
        2.93 1.4 1.7   <z,z,,>
        3.17 1.75 2.05 <z,z,,>
        3.27 1.9 1.9 /
\endpicture
$
\\
\cline{2-3}
&\multicolumn{2}{|c|}{$0\longrightarrow P_1^r\longrightarrow P_1^m\oplus B_2^{r,s}\longrightarrow B_2^{m,s}\longrightarrow 0$}
\\
\hline
\hline
\end{tabular}
\end{center}

This completes the proof of the following fact.

\begin{lem}\label{lem-tab-ext}
  Let $k$ be an~arbitrary field and let
  $Y,Z\in\mathcal{S}_2(k)$ have the same partition type
  $(\beta,\gamma)$.
  If $Y\leq_{\sf arc} Z$, then $Y\leq_{\sf ext} Z$.\qed
\end{lem}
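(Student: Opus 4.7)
The plan is to reduce the global implication $\arcleq \Rightarrow \extleq$ to the five elementary cases corresponding to the basic arc-moves. Observe that $\arcleq$ is by definition the partial order generated by the single moves \textbf{(A)}, \textbf{(A$'$)}, \textbf{(B)}, \textbf{(C)}, \textbf{(D)}, \textbf{(E)}, while $\extleq$ is already transitive in its definition: chains of short exact sequences can be concatenated, with common summands absorbed into each intermediate $M_{i}$. Hence it suffices to show that if $Z$ is obtained from $Y$ by \emph{one} arc-move, then there is a single short exact sequence $0\to U\to M\to V\to 0$ in $\mathcal{S}(k)$ together with direct sum decompositions $Y\cong M\oplus W$ and $Z\cong U\oplus V\oplus W$, for some common complement $W$.

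For a fixed move, the difference $\delta M(Y,Z)$ read off from the tables of Section~\ref{section-operations} pinpoints precisely the summands that change: those with $\delta M_X=-1$ are the summands of $Y$ that disappear (and will form $M$), while those with $\delta M_X=+1$ are the summands of $Z$ that appear (and will form $U\oplus V$). All other indecomposable summands agree and constitute $W$. Thus for each move it remains only to exhibit the corresponding short exact sequence. These sequences are listed in the bottom row of each block of the tables:
\begin{align*}
\text{\bf(A)}&:\; 0\to B_2^{n,s}\to B_2^{m,s}\oplus B_2^{n,r}\to B_2^{m,r}\to 0,\\
\text{\bf(A$'$)}&:\; 0\to B_2^{n,s}\to B_2^{m,s}\oplus P_2^n\oplus P_0^r\to B_2^{m,r}\to 0,\\
\text{\bf(B)}&:\; 0\to B_2^{m,s}\to B_2^{m,r}\oplus P_1^s\to P_1^r\to 0,\\
\text{\bf(C)}&:\; 0\to B_2^{m,r}\to B_2^{m,n}\oplus B_2^{r,s}\to B_2^{n,s}\to 0,\\
\text{\bf(D)}&:\; 0\to P_1^r\to P_1^m\oplus B_2^{r,s}\to B_2^{m,s}\to 0,\\
\text{\bf(E)}&:\; 0\to P_1^m\to B_2^{m,r}\to P_1^r\to 0.
\end{align*}
Each sequence is verified by writing the injection and surjection explicitly on ambient spaces and checking that the morphisms commute with the $T$-action and respect the embedded subspace (i.e.\ live in $\mathcal{S}$), and that dimensions match both for the ambient space and for the subspace. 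This is a routine check, already implicit in the construction of the pickets and bipickets in Section~\ref{section-pickets}.

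The main obstacle is largely bookkeeping rather than mathematics: for moves \textbf{(A)}, \textbf{(C)}, and the new move \textbf{(E)}, the inclusions are canonical and the verification is immediate. The only subtle case is \textbf{(A$'$)}, where the bipicket $B_2^{n,r}$ with $n=r+1$ degenerates into $P_2^{n}\oplus P_0^{r}$ under the identification $B_2^{n,n-1}=P_2^{n}\oplus P_0^{n-1}$ introduced at the end of Section~\ref{section-dim-hom}; here one must make sure that the morphisms in the sequence still define a valid monomorphism of nilpotent linear operators. Once these case-by-case sequences are in place, concatenating them along a path of arc-moves realizing $Y\arcleq Z$ produces the chain of short exact sequences witnessing $Y\extleq Z$, completing the proof.
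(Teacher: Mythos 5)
Your proof is correct and takes essentially the same route as the paper: reduce to a single elementary move and, for each of \textbf{(A)}, \textbf{(A$'$)}, \textbf{(B)}, \textbf{(C)}, \textbf{(D)}, \textbf{(E)}, exhibit the short exact sequence whose middle term collects the summands of $Y$ with $\delta M_X=-1$ and whose end terms are the new summands of $Z$ with $\delta M_X=+1$; these are precisely the sequences recorded in Section~\ref{section-operations}. The only difference is presentational: you make the transitivity/common-summand bookkeeping explicit, which the paper leaves implicit in the definition of $\leq_{\sf ext}$.
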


Note that the Hom matrix determines the multiplicity matrix uniquely.
Namely, let $A$ be a non-injective indecomposable object with
Auslander-Reiten sequence $0\to A\to \bigoplus B_i\to C\to 0$
in $\mathcal S_2^n$.  Then the multiplicity of $A$ as a direct summand
of $Y$ is given by the contravariant defect
$\mu_A(Y)=[A,Y]+[C,Y]-\sum_i [B_i,Y]$.

\section{Some technical facts}\label{section-technical}

We recall the following result from \cite[Lemma~4.2]{kos-sch}.

\begin{lem}\label{lemma-zeros}
Suppose $Y,Z\in\mathcal S_2$ have the same partition type $(\beta,\gamma)$.
\begin{enumerate}
\item The Hom matrix $\delta H(Y,Z)$ has zero entry at each position corresponding
  to a module $P_0^m$, $P_2^m$ where $m\in \mathbb N$.
\item Along each diagonal in the Hom matrix, the entries eventually become constant:
  $$\lim_{m\to\infty}\delta H(Y,Z)_{B_2^{m,r}}\;=\; \delta H(Y,Z)_{P_1^m}$$
\end{enumerate}
\end{lem}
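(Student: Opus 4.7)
The plan is to reduce each claim to an explicit dimension count, exploiting the hypothesis $\alpha_1 \le 2$ (equivalently, $T^2$ annihilates $N_\alpha$) to control $\Hom$-spaces from the indecomposables of $\mathcal{S}_2$ into arbitrary objects of $\mathcal{S}_2$.

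For part~(1), observe first that $P_0^m = (0, N_{(m)}, 0)$ sees only the ambient space, so
$[P_0^m, Y] = [N_{(m)}, N_\beta]_{\mathcal{N}} = \dim\{y \in N_\beta : T^m y = 0\}$
is a function of $\beta$ alone. For $X = P_2^m$ with $m \ge 2$, a morphism $P_2^m \to Y = (N_\alpha, N_\beta, f)$ is determined by the image $y \in N_\beta$ of a generator of $N_{(m)}$; the compatibility conditions read $T^m y = 0$ and $T^{m-2} y \in N_\alpha$. Writing $K_j(M) = \{x \in M : T^j x = 0\}$ and $\pi : N_\beta \to N_\gamma$ for the projection, the second condition becomes $\pi(y) \in K_{m-2}(N_\gamma)$. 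The hypothesis $\alpha_1 \le 2$ enters twice: it gives $K_m(N_\alpha) = N_\alpha$ for $m \ge 2$, and it shows that any lift $y'$ of an element of $K_{m-2}(N_\gamma)$ automatically lies in $K_m(N_\beta)$, since $T^m y' = T^2(T^{m-2} y') \in T^2 N_\alpha = 0$. Thus $\pi$ restricts to a surjection $K_m(N_\beta) \to K_{m-2}(N_\gamma)$ with kernel $N_\alpha$, giving
$$[P_2^m, Y] \;=\; |\alpha| + \dim K_{m-2}(N_\gamma) \;=\; (|\beta|-|\gamma|) + \sum_i \min(\gamma_i, m-2),$$
visibly a function of $(\beta, \gamma)$ only.

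For part~(2), the key tool is a pointwise identity
$$[B_2^{m, r}, X] \;=\; [P_1^m, X] + [P_1^r, X]$$
valid for every indecomposable $X \in \mathcal{S}_2$ once $m$ exceeds the Jordan-block sizes occurring in $X$. This is suggested by the short exact sequence $0 \to P_1^m \to B_2^{m,r} \to P_1^r \to 0$ underlying arc-move~{\bf (E)}, and I will verify it by a direct row-by-row comparison in the Hom table of Section~\ref{section-dim-hom}. Summing the identity against the multiplicities $\delta M_X = \mu_X(Z) - \mu_X(Y)$ yields $\delta H_{B_2^{m,r}} = \delta H_{P_1^m} + \delta H_{P_1^r}$ for all sufficiently large $m$. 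For such $m$ one has $[P_1^m, X] = \dim_k(\text{ambient of }X)$ (check each row of the table), so $\delta H_{P_1^m}$ reduces to $\sum_X \delta M_X \dim_k(\text{ambient of }X)$, which vanishes because $Y$ and $Z$ share the ambient $N_\beta$. Both the row $\delta H_{B_2^{m,r}}$ and the row $\delta H_{P_1^m}$ therefore stabilize as $m \to \infty$, which is the eventual constancy asserted in~(2).

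The main obstacle is the row-by-row verification of the pointwise identity. The only non-trivial case is $X = B_2^{a,b}$: the characteristic-function correction $-\mathbf{1}\{m > a,\, r \le b\}$ in the formula for $[B_2^{m,r}, B_2^{a,b}]$ must match the shift from $\min\{r, b\}$ in the $B_2$-row to $\min\{r-1, b\}$ in the $P_1^r$-row, which reduces to the elementary identity $\min\{r, b\} - \mathbf{1}\{r \le b\} = \min\{r-1, b\}$. The remaining three cases ($P_0^a$, $P_2^a$, $P_1^a$) are immediate from the table. Once this bookkeeping is in hand, the rest of the argument is routine summation and the two uses of $\alpha_1 \le 2$ already isolated in part~(1).
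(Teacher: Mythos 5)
Your proof is correct, but there is nothing in this paper to measure it against: Lemma~\ref{lemma-zeros} is recalled from \cite[Lemma~4.2]{kos-sch} and no proof is reproduced here, so you have in effect supplied the missing argument. Both halves check out. In part~(1), the identifications $[P_0^m,Y]=\sum_i\min(m,\beta_i)$ and $[P_2^m,Y]=(|\beta|-|\gamma|)+\sum_i\min(\gamma_i,m-2)$ agree with every column of the Hom table, and the two places where $\alpha_1\le 2$ enters (the kernel of $\pi$ on the solution set being all of $N_\alpha$, and the automatic vanishing $T^my'=T^2(T^{m-2}y')=0$ for a lift $y'$) are exactly where the hypothesis is needed. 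In part~(2), the pointwise identity $[B_2^{m,r},X]=[P_1^m,X]+[P_1^r,X]$ for $m$ exceeding the block sizes of $X$ holds in all four cases, the only delicate one being $\min\{r,b\}-\mathbf{1}\{r\le b\}=\min\{r-1,b\}$, which you verify; and $\delta H_{P_1^m}=0$ for $m>\beta_1$ follows, as you say, from $[P_1^m,X]$ being the dimension of the ambient space of $X$. The one point you should make explicit is the reconciliation with the displayed formula: your computation gives $\delta H_{B_2^{m,r}}=\delta H_{P_1^m}+\delta H_{P_1^r}=\delta H_{P_1^r}$ for $m>\beta_1$, so the stable value along the diagonal with second index $r$ fixed is the entry at the pole $P_1^r$ sitting at the boundary end of that diagonal (which is how the formula must be read -- after the Moebius identification that $P_1^r$ is the continuation of the diagonal); taken literally with the same large $m$ on both sides, the right-hand side would be $0$, which is false in general. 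This is a defect of the lemma's notation rather than of your argument, but a sentence addressing it would make the proof airtight.
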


The following consequence, which is technical but easy to show, will be used
in the proof of Proposition~\ref{proposition-key-lemma}.

  \begin{lem}\label{lem-hom-yields-mult}
    Suppose that both $Y,Z\in\mathcal S_2^n$ have partition type $(\beta,\gamma)$.
    \begin{enumerate}
    \item The nonzero part of the Hom matrix $\delta H(Y,Z)$
      is contained in the union of the $\tau$-orbits for $X=B_2^{3,1},\ldots, B_2^{n,1}$;
      they form a Moebius band, i.e.\ a quiver of type $\mathbb Z \mathbb A_{n-2}$
      with suitable identifications.
    \item
      For each non-injective $A\in\ind\,\mathcal S_2^n$, the entry $\delta M_A=\delta M(Y,Z)_A$
      in the multiplicity matrix
      can be read off from the the restriction of the Hom matrix to the Moebius band
      by a formula of type
      $$\delta M_A=\beta_A+\beta_C-\sum_i\beta_{B_i}$$
      where $0\to A\to \bigoplus_i B_i\to C\to 0$ is the Auslander-Reiten sequence starting at $A$ and
      $$\beta_X=\left\{\begin{array}{ll}\delta H(Y,Z)_X & \text{if $X$ is in the Moebius band}\\
      0               & \text{otherwise.}\end{array}\right.$$
    \end{enumerate}
  \end{lem}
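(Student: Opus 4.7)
My plan has two parts, corresponding to the two claims of the lemma.

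For Part (1), the starting point is Lemma~\ref{lemma-zeros}(1), which already tells us that $\delta H(Y,Z)_X = 0$ whenever $X$ is of type $P_0^m$ or $P_2^m$. Hence the support of $\delta H$ is automatically concentrated on the indecomposables of type $P_1^m$ and $B_2^{m,r}$. What remains is to identify this collection with the union of the $\tau$-orbits of $B_2^{3,1},\ldots,B_2^{n,1}$. I would read off the Auslander--Reiten translate from the quiver sketched at the beginning of Section~\ref{section-dim-hom}: the $\tau$-orbit of $B_2^{\ell,1}$ is a single diagonal which, when traced through the boundary identification $P_1^m\sim P_1^m$, becomes a closed loop passing through a prescribed sequence of $B_2^{m,r}$ and $P_1^m$ modules. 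Counting shows there are exactly $n-2$ such orbits, each of length $n-1$ with the $P_1^m$ occurring once, and together they exhaust $\{P_1^m\}\cup\{B_2^{m,r}\}$. The resulting combinatorial shape is a Moebius band modelled on $\mathbb Z\mathbb A_{n-2}$.

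For Part (2), I would invoke the formula established at the end of Section~\ref{section-operations}: for every non-injective $A\in\mathrm{ind}\,\mathcal S_2^n$ with Auslander--Reiten sequence
$$0\longrightarrow A\longrightarrow \bigoplus_i B_i\longrightarrow C\longrightarrow 0,$$
the multiplicity of $A$ as a summand of $Y$ is the contravariant defect $\mu_A(Y)=[A,Y]+[C,Y]-\sum_i[B_i,Y]$, and similarly for $Z$. Subtracting yields
$$\delta M_A = \delta H_A + \delta H_C - \sum_i \delta H_{B_i}.$$
Because $\delta H_X=0$ whenever $X$ lies outside the Moebius band (by Part (1)), and $\beta_X$ coincides with $\delta H_X$ on the Moebius band while also vanishing off it, replacing each $\delta H$-entry in the sum by the corresponding $\beta$-entry leaves the value unchanged. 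This gives exactly the claimed formula $\delta M_A=\beta_A+\beta_C-\sum_i\beta_{B_i}$.

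The main obstacle I expect is bookkeeping for Part (1): I need to verify that the $\tau$-translation on $\mathcal S_2^n$ behaves as the picture suggests, in particular that $\tau B_2^{\ell,1}$ is again a bipicket (or a $P_1^m$ at the turning point) and that the $n-2$ orbits together cover every $B_2^{m,r}$ and every $P_1^m$ exactly once up to the Moebius identification. Once this orbit picture is pinned down from the AR-structure described in Section~\ref{section-dim-hom}, both statements follow mechanically; Part (2) is essentially a formal consequence of the contravariant-defect formula together with Part (1).
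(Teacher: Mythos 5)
Your proposal is correct and follows essentially the same route as the paper: Part (1) is deduced from Lemma~\ref{lemma-zeros} (the vanishing of $\delta H$ at all $P_0^m$ and $P_2^m$, which leaves exactly the $\tau$-orbits of the $B_2^{m,1}$ forming the Moebius band), and Part (2) is the contravariant defect formula $\mu_A(Y)=[A,Y]+[C,Y]-\sum_i[B_i,Y]$ stated at the end of Section~\ref{section-operations}, applied to $Y$ and $Z$ and combined with Part (1) to justify replacing $\delta H$ by $\beta$. The paper's own proof is a two-line citation of exactly these two ingredients; your write-up merely fills in the same details more explicitly.
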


  \begin{proof}
    The first statement follows from Lemma~\ref{lemma-zeros}, the second
    from the contravariant defect formula above.
  \end{proof}

In the proof we will use \cite[Lemmata~4.6 and 4.7]{kos-sch}.

\begin{lem}\label{lemma-positive-region}
Consider the following matrix of integers.
$$\beginpicture\setcoordinatesystem units <.5cm,.5cm>
\multiput{} at 0 0  7 7 /
\put{$\beta^{0,0}$} at 0 3
\put{$\beta^{1,0}$} at 1 2
\put{$\beta^{u,0}$} at 3 0
\put{$\beta^{0,v-1}$} at 2 5
\put{$\beta^{1,v-1}$} at 3 4
\put{$\beta^{u,v-1}$} at 5 2
\put{$\beta^{0,v}$} at 3 6
\put{$\beta^{1,v}$} at 4 5
\put{$\beta^{u,v}$} at 6 3
\put{$\beta^{0,v+1}$} at 4.2 7.2
\put{$\beta^{1,v+1}$} at 5.2 6.2
\put{$\beta^{u,v+1}$} at 7.2 4.2
\multiput{$\cdot$} at .8 3.8  1 4  1.2 4.2
                      1.8 2.8  2 3  2.2 3.2
                      3.8 .8  4 1  4.2 1.2 /
\multiput{$\ddots$} at 2 1  4 3  5 4  6 5 /
\setdots<2pt>
\plot -.9 3.1  2.9 7.1  7 3  3 -1  -1 3 /
\endpicture$$
Suppose the following conditions are satisfied.
\begin{enumerate}
\item All entries are nonnegative
\item The numbers $\beta^{0,0}, \beta^{0,1},\ldots,\beta^{0,v}$ are
      strictly positive
\item $\beta^{0,v+1}=0$
\item For each $0\leq i\leq u-1$, $0\leq j\leq v$,
$$\delta^{ij}=\beta^{i,j}+\beta^{i+1,j+1}-\beta^{i,j+1}-\beta^{i+1,j}\leq 0$$
\end{enumerate}
Then all entries in the parallelogram are positive:
$\beta^{i,j}>0$ for each $0\leq i\leq u$, $0\leq j\leq v$.
\end{lem}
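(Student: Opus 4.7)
The plan is to exploit the fact that condition (4) is a discrete concavity statement, and reduce the positivity of $\beta^{i,j}$ to a one-dimensional telescoping argument along rows.

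First, I introduce the horizontal discrete difference
$$e^{i,j} \;=\; \beta^{i,j+1}-\beta^{i,j}.$$
A direct rewrite of the quantity
$\delta^{ij}=\beta^{i,j}+\beta^{i+1,j+1}-\beta^{i,j+1}-\beta^{i+1,j}$
gives $\delta^{ij}=e^{i+1,j}-e^{i,j}$, so condition (4) is equivalent to the monotonicity statement
$$e^{i+1,j}\;\leq\;e^{i,j}\qquad (0\leq i\leq u-1,\; 0\leq j\leq v),$$
i.e.\ the quantity $e^{i,j}$ is nonincreasing in $i$ for each fixed $j$. This is the only place condition (4) gets used.

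Next, I fix $i$ with $0\leq i\leq u$ and $j$ with $0\leq j\leq v$, and I telescope:
$$\beta^{i,v+1}-\beta^{i,j}\;=\;\sum_{k=j}^{v} e^{i,k}\;\leq\;\sum_{k=j}^{v} e^{0,k}\;=\;\beta^{0,v+1}-\beta^{0,j}\;=\;-\beta^{0,j},$$
where the inequality uses the monotonicity of $e^{\cdot,k}$ in $i$ established above, and the last equality uses hypothesis (3). Rearranging,
$$\beta^{i,j}\;\geq\;\beta^{i,v+1}+\beta^{0,j}\;\geq\;\beta^{0,j}\;>\;0,$$
where the penultimate inequality uses hypothesis (1) applied to $\beta^{i,v+1}$, and the last uses hypothesis (2). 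This gives exactly the desired conclusion $\beta^{i,j}>0$ throughout the parallelogram.

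The argument is essentially a one-line telescoping once the reformulation of (4) is in hand, so I do not expect any real obstacle. The only thing worth checking carefully is the indexing: condition (4) needs to apply for $0\leq j\leq v$, which forces $\beta^{i,v+1}$ to be part of the data (it is, as the picture indicates), and the telescoped sum $\sum_{k=j}^{v}e^{0,k}$ cleanly collapses to $-\beta^{0,j}$ precisely because the hypothesis $\beta^{0,v+1}=0$ kills the boundary term. No induction on $i$ or double induction on $(i,j)$ is needed.
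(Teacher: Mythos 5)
Your argument is correct. The identity $\delta^{ij}=e^{i+1,j}-e^{i,j}$ with $e^{i,j}=\beta^{i,j+1}-\beta^{i,j}$ is an exact rewriting of condition (4), so $e^{i,j}\leq e^{0,j}$ for all $0\leq i\leq u$ and $0\leq j\leq v$; the telescoping $\beta^{i,v+1}-\beta^{i,j}=\sum_{k=j}^{v}e^{i,k}\leq\sum_{k=j}^{v}e^{0,k}=-\beta^{0,j}$ then gives $\beta^{i,j}\geq\beta^{i,v+1}+\beta^{0,j}>0$, using exactly hypotheses (1), (2), (3) where you say you use them, and all indices stay inside the range where (4) is assumed. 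Note that the paper itself gives no proof of this lemma --- it is recalled from \cite[Lemmata 4.6 and 4.7]{kos-sch} --- so there is nothing internal to compare against; your short, self-contained telescoping argument (which also dualizes immediately to give Lemma~\ref{lemma-positive-region-ii}) is a perfectly good substitute for the citation.
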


Also the dual version holds:

\begin{lem}\label{lemma-positive-region-ii}
Consider the following matrix of integers.
$$\beginpicture\setcoordinatesystem units <.5cm,.5cm>
\multiput{} at 0 0  7 7 /
\put{$\beta^{0,v}$} at 7 4
\put{$\beta^{\text-1,v}$} at 6 5
\put{$\beta^{\text-u,v}$} at 4 7
\put{$\beta^{0,1}$} at 5 2
\put{$\beta^{\text-1,1}$} at 4 3
\put{$\beta^{\text-u,1}$} at 2 5
\put{$\beta^{0,0}$} at 4 1
\put{$\beta^{\text-1,0}$} at 3 2
\put{$\beta^{\text-u,0}$} at 1 4
\put{$\beta^{0,\text-1}$} at 3 0
\put{$\beta^{\text-1,\text-1}$} at 2 1
\put{$\beta^{\text-u,\text-1}$} at 0 3
\multiput{$\cdot$} at 6.2 3.2  6 3  5.8 2.8  5.2 4.2  5 4  4.8 3.8  3.2 7.2  3 6  2.8 5.8 /
\multiput{$\ddots$} at 5 6  3 4  2 3  1 2 /
\setdots<2pt>
\plot 7.9 3.9  4.1 -.1  0 4  4 8  8 4 /
\endpicture$$
Suppose that in addition to conditions 1 and 2 from the previous lemma also the following are satisfied.
\begin{enumerate}
\item[3'.] $\beta^{0,-1}=0$
\item[4'.] For each $-u\leq i<0$, $-1\leq j<v$,
$$\delta^{ij}=\beta^{i,j}+\beta^{i+1,j+1}-\beta^{i,j+1}-\beta^{i+1,j}\leq 0$$
\end{enumerate}
Then all entries in the parallelogram are positive:
$\beta^{i,j}>0$ for each $-u\leq i\leq 0$, $0\leq j\leq v$.
\qed
\end{lem}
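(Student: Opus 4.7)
The plan is to obtain this dual version as an immediate corollary of Lemma~\ref{lemma-positive-region} by a coordinate reflection that carries the parallelogram in the second picture onto the parallelogram in the first, preserving the discrete concavity hypothesis. The key observation is that the second difference $\delta^{i,j}=\beta^{i,j}+\beta^{i+1,j+1}-\beta^{i,j+1}-\beta^{i+1,j}$ is symmetric under a simultaneous $180^\circ$ rotation of the index set, so flipping both indices turns one lemma into the other.

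Concretely, I would introduce
\[
\tilde\beta^{p,q}=\beta^{-p,\,v-q}\qquad(0\le p\le u,\ 0\le q\le v+1)
\]
and verify the four hypotheses of Lemma~\ref{lemma-positive-region} for the array $\tilde\beta$. Nonnegativity (hypothesis~1) is immediate. The left column $\tilde\beta^{0,0},\tilde\beta^{0,1},\ldots,\tilde\beta^{0,v}$ is the reversal of $\beta^{0,v},\beta^{0,v-1},\ldots,\beta^{0,0}$, hence strictly positive by hypothesis~2 of the dual lemma. The boundary value $\tilde\beta^{0,v+1}=\beta^{0,-1}$ vanishes by hypothesis~3', which is precisely hypothesis~3 for $\tilde\beta$.

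For the concavity, a direct computation with $i=-p-1$ and $j=v-q-1$ gives
\[
\tilde\beta^{p,q}+\tilde\beta^{p+1,q+1}-\tilde\beta^{p,q+1}-\tilde\beta^{p+1,q}=\beta^{i,j}+\beta^{i+1,j+1}-\beta^{i,j+1}-\beta^{i+1,j}=\delta^{i,j},
\]
and as $p$ and $q$ range over $0\le p\le u-1$, $0\le q\le v$, the pair $(i,j)$ ranges over exactly $-u\le i\le -1$, $-1\le j\le v-1$, which is the range on which hypothesis~4' supplies $\delta^{i,j}\le 0$. Thus the transformed array satisfies all hypotheses of Lemma~\ref{lemma-positive-region}, which yields $\tilde\beta^{p,q}>0$ for $0\le p\le u$, $0\le q\le v$. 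Translating back via $p=-i$, $q=v-j$ gives $\beta^{i,j}>0$ on the parallelogram $-u\le i\le 0$, $0\le j\le v$.

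The only thing that could go wrong is an off-by-one error in matching the index ranges of conditions~4 and~4', so the ``main obstacle'' is really just the bookkeeping of the reflection. Once that is checked, no new combinatorial argument is required and the conclusion is immediate from the previous lemma.
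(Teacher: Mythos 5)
Your reduction is correct: the paper itself offers no argument for this dual version (it is stated with ``Also the dual version holds'' and a \qed), and the point reflection $\tilde\beta^{p,q}=\beta^{-p,\,v-q}$ you propose is exactly the symmetry that justifies that omission — your index bookkeeping for conditions 1, 2, 3${}'$, 4${}'$ and for the conclusion all checks out. This matches the paper's (implicit) approach, just written out in full.
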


\section{The hom-order implies the arc-order}
\label{chapter-hom-tab}

Our aim is to show
that $Y \leq_{\sf hom} Z$ implies $Y\leq_{\sf arc} Z$.

\begin{thm}\label{theorem-hom-arc}
  Suppose the objects $Y$ and $Z$ in $\mathcal S_2(k)$ have
  the same partition type $(\beta,\gamma)$.
    If $Y \leq_{\sf hom} Z$ holds, then so does $Y\leq_{\sf arc} Z$.
\end{thm}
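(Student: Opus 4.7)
The plan is to adapt the algorithmic strategy used in \cite{kos-sch} for the case of fixed $\alpha$, extending it to accommodate the new move (E). I proceed by induction, establishing the following reduction step: if $Y \not\cong Z$ and $Y \leq_{\sf hom} Z$, then there exists $Z' \in \mathcal S_2(k)$ of partition type $(\beta,\gamma)$ such that $\Delta(Z')$ is obtained from $\Delta(Z)$ by a single arc-move of type (A), (B), (C), (D), or (E), and such that $Y \leq_{\sf hom} Z'$ continues to hold. Each arc-move strictly decreases a well-founded numerical invariant of the diagram (for (A)--(D), the number of crossings; for (E), the number of arcs, with a lexicographic tie-breaker on crossings), so iterating terminates at some $Z_N$ with $\delta H(Y, Z_N) = 0$; the defect formula of Lemma \ref{lem-hom-yields-mult}(2) then forces $\delta M(Y, Z_N) = 0$, so $Z_N \cong Y$ and $Y \leq_{\sf arc} Z$.

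To carry out the reduction step, I analyze the hom matrix $\delta H = \delta H(Y,Z)$, whose entries are nonnegative by hypothesis. Lemma \ref{lemma-zeros}(1) shows that $\delta H$ vanishes on the $P_0^m$ and $P_2^m$ columns, and Lemma \ref{lem-hom-yields-mult}(1) confines its support to the Moebius band generated by the $\tau$-orbits of the bipickets $B_2^{\ell,1}$ together with the $P_1^\ell$ at the edges. Since $Y \not\cong Z$, the multiplicity matrix $\delta M$ is nonzero, and Lemma \ref{lem-hom-yields-mult}(2) then forces $\delta H$ to be nonzero on the Moebius band. The positive-region Lemmas \ref{lemma-positive-region} and \ref{lemma-positive-region-ii} yield a parallelogram of strictly positive entries in $\delta H$; choosing an extremal corner of this parallelogram pinpoints a specific indecomposable summand of $Z$, hence a specific local configuration in $\Delta(Z)$, and matching the $\delta H$-patterns tabulated in Section \ref{section-operations} against the extremal region reads off the correct move. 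Verification of $Y \leq_{\sf hom} Z'$ then reduces to checking that $\delta H(Y,Z) - \delta H(Z',Z)$ is entrywise nonnegative, which is exactly the content of the positive-region lemmas at the chosen position.

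The principal obstacle I anticipate is deciding when the new move (E) must be invoked rather than one of (A)--(D). Since (E) corresponds to the short exact sequence $0 \to P_1^m \to B_2^{m,r} \to P_1^r \to 0$, its $\delta H(Z',Z)$-pattern (computed in Section \ref{section-operations}) is a rectangular strip along the $P_1$-edge of the Moebius band: namely $\delta H_{B_2^{\ell,t}} = 1$ for $\ell > m$ and $t \leq r$, and $\delta H_{P_1^\ell} = 1$ for $\ell \leq r$, with all other entries zero. I expect the diagnostic for choosing (E) to be that the positive region of $\delta H(Y,Z)$ extends all the way to the $P_1$-boundary of the band, witnessed by a $P_1^\ell$-entry attaining the stable diagonal value of Lemma \ref{lemma-zeros}(2) --- reflecting precisely the fact that $\alpha(Y) \neq \alpha(Z)$ forces at least one arc of $\Delta(Z)$ to be exploded. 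The extremal choice then selects the parameters $m, r$ of the bipicket to explode, and the remaining cases coincide with those handled in \cite{kos-sch}.
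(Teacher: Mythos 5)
Your proposal follows essentially the same route as the paper: the paper proves exactly your reduction step as Proposition~\ref{proposition-key-lemma}, locating a positive parallelogram in $\delta H(Y,Z)$ via Lemmas~\ref{lemma-zeros}, \ref{lem-hom-yields-mult}, \ref{lemma-positive-region} and \ref{lemma-positive-region-ii}, reading off the move from the corners, and iterating until the Hom matrix vanishes. The only point where your sketch is slightly off is the diagnostic for move {\bf (E)}: merely reaching the $P_1$-edge of the band does not suffice, since the regions for moves {\bf (B)}--{\bf (D)} also contain entries $P_1^\ell$ (with $\ell>1$); the paper's criterion is that $P_1^1$ lies in the parallelogram, i.e.\ the region wraps all the way around the Moebius band.
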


This theorem follows from the following.

\begin{prop}\label{proposition-key-lemma}
Suppose $Y$ and $Z$ have the same
partition type $(\beta,\gamma)$, $Y\leq_{\sf hom}Z$ and
$Y\not\cong Z$.  Then there is an operation on the arc diagram for $Z$
of type {\bf (A)}, {\bf (B)}, {\bf (C)}, {\bf (D)} or {\bf (E)}
which yields a module $Z'$ such that
$$ Z'\leq_{\sf arc} Z,\qquad Z'\not\cong Z,\qquad\text{and}\qquad
   Y\leq_{\sf hom}Z'.$$
\end{prop}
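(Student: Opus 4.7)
The strategy is to extract from the Hom matrix $\delta H=\delta H(Y,Z)$, which is nonnegative and not identically zero by hypothesis, a distinguished positive entry that signals which configuration appears in the arc diagram of $Z$, and hence which of the five arc moves to reverse. By Lemma~\ref{lemma-zeros}, all nonzero entries lie on the Moebius band of $\tau$-orbits through $B_2^{3,1},\ldots,B_2^{n,1}$ extended along the boundary $P_1^\ell$'s. This Moebius band is also the common support of every ``elementary'' Hom matrix produced by the moves {\bf (A)}--{\bf (E)} (the shaded regions in the tables of Section~\ref{section-dim-hom}), so the task reduces to matching the support of $\delta H(Y,Z)$ against one of these five elementary shapes.

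First I would fix a sensible ordering of positive entries, e.g.\ lexicographic by endpoint position in the Auslander--Reiten quiver, and select a ``leftmost'' positive entry of $\delta H$. The positivity lemmas \ref{lemma-positive-region} and \ref{lemma-positive-region-ii} then show that a single positive entry forces an entire parallelogram of positive entries, because the local convexity inequality $\delta^{ij}\leq 0$ is exactly the Auslander--Reiten inequality satisfied by $\delta H$ whenever the multiplicity $\delta M_A$ at the summand $A$ is nonpositive; and that multiplicity, by Lemma~\ref{lem-hom-yields-mult}, is computed from $\delta H$ itself by the contravariant-defect formula. This gives a block of forced positivity containing one of the ``characteristic'' shaded regions of the table.

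Next I would translate the shape of this parallelogram into a structural statement about the arc diagram of $Z$. If two arcs of $Z$ are nested with strictly decreasing endpoints, I apply the reverse of move {\bf (A)} or {\bf (A')} to obtain $Z'$; an arc of $Z$ next to a pole of appropriate height calls for the reverse of {\bf (B)} or {\bf (D)}; two crossing arcs, for the reverse of {\bf (C)}. If none of these patterns is available but $\delta H(Y,Z)$ still has positive entries, the forced configuration is that of an isolated bipicket $B_2^{m,r}$ (with $m>r+1$) whose endpoints are not part of any {\bf (A)}--{\bf (D)} pattern; in that case I apply the reverse of the new move {\bf (E)}, replacing $B_2^{m,r}$ by $P_1^m\oplus P_1^r$, to produce $Z'$.

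Finally, in each of the five cases I would verify that $\delta H(Y,Z')\geq 0$. Using the short exact sequences listed beneath each move in Section~\ref{section-operations}, the elementary Hom matrix $\delta H(Z',Z)$ is exactly the shaded region of the corresponding table, and $\delta H(Y,Z')=\delta H(Y,Z)-\delta H(Z',Z)$. By construction the chosen positive entry sits at the ``tip'' of the parallelogram, so subtracting the $0/1$-indicator of the shaded region leaves a nonnegative matrix. The step I expect to be the main obstacle is case {\bf (E)}: its shaded region is an entire quadrant of $B_2^{\ell,t}$'s (with $\ell>m$, $t\leq r$) together with a strip of $P_1^\ell$'s ($\ell\leq r$), so one must propagate a single positive entry of $\delta H(Y,Z)$ across an infinite region of the Moebius band. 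Lemmas~\ref{lemma-positive-region} and~\ref{lemma-positive-region-ii} are designed precisely for this propagation, and carrying it out is the technical heart of the argument; it is also the reason why {\bf (E)} must be added to the four moves already used in \cite{kos-sch}, since without freeing the partition $\alpha$ no such reduction is possible when the only nonzero entries of $\delta H$ form the {\bf (E)}-pattern.
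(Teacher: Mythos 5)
Your overall strategy is the paper's: work with the nonnegative matrix $\delta H(Y,Z)$ supported on the Moebius band (Lemma~\ref{lemma-zeros}), use Lemmas~\ref{lemma-positive-region} and~\ref{lemma-positive-region-ii} to propagate a positive entry to a full parallelogram, perform the arc move whose elementary Hom matrix is the indicator of that parallelogram, and conclude from $\delta H(Y,Z')=\delta H(Y,Z)-\delta H(Z',Z)\geq 0$ that $Y\leq_{\sf hom}Z'$. But the step you leave vague is precisely the one that carries the proof. The decisive point is not matching ``the support of $\delta H$'' against five shapes, nor scanning the arc diagram of $Z$ for nested or crossing arcs: it is that the two corners of the parallelogram must be indecomposables occurring with strictly higher multiplicity in $Z$ than in $Y$ --- these are the summands you strip off when performing the move, and their existence is also what guarantees $Z'\not\cong Z$. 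Your propagation remark contains the germ of this (the convexity inequality $\delta^{ij}\le 0$ holds exactly where the defect formula of Lemma~\ref{lem-hom-yields-mult} gives $\delta M\le 0$), but you never turn it into a selection rule. The paper does: it starts from a maximal run of positive entries on an anti-diagonal, walks downward (then upward) anti-diagonal by anti-diagonal, continuing as long as the relevant entries of $\delta M$ are all $\le 0$ --- so that Lemma~\ref{lemma-positive-region} forces the next anti-diagonal to be positive --- and stops at the first positive entry of $\delta M$, which \emph{is} the corner $X''$ (resp.\ $X'$). A ``leftmost positive entry of $\delta H$'' chosen lexicographically need not sit at such a position, and if the corners are not summands of $Z$ the move cannot be performed at all; your final nonnegativity check then subtracts the wrong shaded region.

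Your worry that case {\bf (E)} requires propagation ``across an infinite region'' is also misplaced: the paper fixes $n>\beta_1$ and works in $\mathcal S_2^n$, where the nonzero part of the Hom matrix lives on a finite stripe of type $\mathbb Z\mathbb A_{n-2}$ bounded by orbits of zeros, and the termination of both walks is exactly the observation that $\beta^{u,0}$ eventually lands on a zero boundary orbit. Once the parallelogram is finished, the five cases {\bf (A)}--{\bf (E)} are distinguished purely by which indecomposables ($P_1^1$ in the interior, a $P_1^r$ at a corner, no $P_1^r$ at all, etc.) it contains; move {\bf (E)} presents no additional analytic difficulty beyond the other four.
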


\begin{proof}
The methods from the proof of \cite[Proposition~4.5]{kos-sch} can be adapted.
Here, we have
to consider one more case, namely the move of the type {\bf (E)}. For the convenience
of the reader we present the proof with all details. Moreover, after the proof we will 
illustrate it by an example.

\subsection{The proof}

{\bf The set-up.} We assume that the entries in the Hom-matrix
$\delta H(Y,Z)$ are all nonnegative and that at least one entry is positive.

{\bf The goal.} We show that there is a parallelogram in the
Hom-matrix (in the shape of one of the shaded
regions in Section~\ref{section-operations})
which satisfies the following two conditions.
\begin{itemize}
\item[(P1)] All entries within the parallelogram are strictly positive.
\item[(P2)] The two indecomposable modules $X'$ and $X''$ corresponding to the
  right corner of the parallelogram and to the point just left of the
  left corner, respectively, occur with higher multiplicity as direct summands of $Z$
  than as a direct summand of $Y$.
\end{itemize}

{\bf Step 1.}
We choose a number $n>\beta_1$ and work in the category $\mathcal S_2^n$.
Recall that in the Hom matrix, the nonzero entries occur in the
orbits given by the modules $B_2^{3,1},\ldots,B_2^{n,1}$.  The region given by those orbits 
forms a stripe of
type $\mathbb Z\mathbb A_{n-2}$, with identifications (see Lemma \ref{lemma-zeros}).
For the purpose of this algorithm, we view the Hom matrix as a stripe of
type $\mathbb Z\mathbb A_n$, with the orbits of the $B_2^{3,1},\ldots,B_2^{n,1}$
in the center and two orbits of zeros on the boundary.

Pick a sequence $\beta^{0,0},\ldots,\beta^{0,v}$ of positive entries
arranged along an anti-diagonal in the Hom matrix, such that the neighboring entries
on the anti-diagonal, $\beta^{0,\text-1}$ and $\beta^{0,v+1}$, are both zero.
This is possible since the entries in the matrix are nonnegative, since there
is at least one positive entry, and since there are zeros on the boundary orbits (see Lemma \ref{lemma-zeros}).


{\bf Step 2.}
We determine the right corner of the parallelogram.  Put $u=0$.

Consider the sequence $\delta^{u,0},\ldots,\delta^{u,v}$
of entries in the multiplicity matrix in the positions given by
the $\beta^{u,0},\ldots,\beta^{u,v}$ in the Hom matrix.

If one of the entries in the sequence $\delta^{u,0},\ldots,\delta^{u,v}$
is positive, put $u''=u$ and let $\delta^{u'',w''}$ be the first such entry.
In this case, $(u'',w'')$ will be the right corner of the rectangle and
$X''$, the object corresponding to the position $(u'',w'')$, will be a summand
occurring with higher multiplicity in $Z$ than in $Y$.  We are done with the second step.

If none of the entries in $\delta^{u,0},\ldots,\delta^{u,v}$ is positive,
then by  Lemma \ref{lem-hom-yields-mult} we obtain that the assumptions of Lemma~\ref{lemma-positive-region} are satisfied, 
so using this lemma we obtain that all the numbers
$\beta^{u+1,0},\ldots,\beta^{u+1,v}$ in the Hom matrix, on the anti-diagonal just
under the previous anti-diagonal, are positive.
Put $u:=u+1$ and proceed with the paragraph under Step~2.

Note that this process terminates:  For $u$ large enough, $\beta^{u,0}$
will correspond to a point on the boundary of the Hom matrix.  Hence $\beta^{u,0}=0$.


{\bf Step 3.}
We determine the left corner of the parallelogram.  Put $u=0$ and $v=v''$.

Consider the sequence $\delta^{u-1,-1},\ldots,\delta^{u-1,v-1}$
of entries in the multiplicity matrix in the positions just left of
the $\beta^{u,0},\ldots,\beta^{u,v}$ in the Hom matrix.

If one of the entries in the sequence $\delta^{u-1,-1},\ldots,\delta^{u-1,v-1}$
is positive, put $u'=u-1$ and let $\delta^{u',w'}$ be the last such entry.
In this case, $(u',w')$ will be the point just left of the left corner of the rectangle and
$X'$, the object corresponding to the position $(u',w')$, will be a summand
occurring with higher multiplicity in $Z$ than in $Y$.  We are done with the third step.

If none of the entries in $\delta^{u-1,-1},\ldots,\delta^{u-1,v-1}$ is positive,
then by Lemma \ref{lem-hom-yields-mult} we obtain that the assumptions of Lemma~\ref{lemma-positive-region-ii} 
are satisfied, so using this lemma we obtain that all the numbers
$\beta^{u-1,0},\ldots,\beta^{u-1,v}$ in the Hom matrix, on the anti-diagonal just
above the previous anti-diagonal, are positive.  
Put $u:=u-1$ and proceed with the paragraph under Step~3.

Note that this process terminates: For $-u$ large enough, the position corresponding to
$\beta^{u,0}$ will be a point on the boundary of the Hom matrix, so $\beta^{u,0}=0$.


{\bf Conclusion.}
In each case the parallelogram marked off by $X'$ and $X''$
is of one of the types {\bf (A)}, {\bf (B)}, {\bf (C)}, {\bf (D)}  or  {\bf (E)} in Section~\ref{section-operations}.
(Namely, if none of the modules $P_1^r$ is in the rectangle, then the type is {\bf (A)};
if $X''$ is one of the $P_1^r$, then the type is {\bf (B)}; in case $X'\cong P_1^r$ for some $r$,
then the type is {\bf (D)}; and if there are modules $P_1^r$ (for $r\neq 1$) in the rectangle,
but neither $X'$ nor $X''$ has this form, then the type is {\bf (C)}; finally if the modul $P_1^1$ is in the rectangle,
then the type is {\bf (E)}.)
Write $Z=Z_0\oplus X'\oplus X''$  and let $X_1,\ldots,X_s$ ($s=2$ or $3$)
be the  modules corresponding to the entry $-1$ in the multiplicity matrix
for the arc operation.
Put $Z'=Z_0\oplus X_1\oplus\cdots\oplus X_s$.  By this construction,
$Z'\leq_{\sf arc} Z$. We obtain the arc diagram of $Z'$ from the arc diagram of $Z$
by a single move of type ${\bf (A)},\,{\bf (B)},\,{\bf (C)},\,{\bf (D)}$ or ${\bf (E)}$, 
so the hom matrix was changed in the way described in Section \ref{section-operations}, so we obtain that
$$\delta H(Y,Z')_X\;=\;\delta H(Y,Z)_X+\left\{\begin{array}{ll}-1 &
                            \text{if $X$ is in the parallelogram}\\
                     0 & \text{otherwise}\end{array}\right.$$
Since the entries for $\delta H(Y,Z)$ within the parallelogram are all
positive, the matrix $\delta H(Y,Z')$ is nonnegative and hence
$$Y\leq_{\sf hom} Z'\qquad\text{and}\qquad Z'\leq_{\sf arc}Z.$$
\end{proof}

\begin{proof}[Proof of Theorem~\ref{theorem-hom-arc}]
Assume that $Y\leq_{\sf hom}Z$.
Since each application of Proposition~\ref{proposition-key-lemma}
reduces the number of crossings in the arc diagram,
a finite number of steps
suffices to produce a sequence of modules
$Z$, $Z'$, $Z''$, $\ldots$, $Z^{(m)}=Y$ such that
$$Y=Z^{(m)}\leq_{\sf arc} Z^{(m-1)}\leq_{\sf arc} \cdots\leq_{\sf arc} Z''
          \leq_{\sf arc} Z' \leq_{\sf arc} Z.$$
\end{proof}

\subsection{An example}\label{section-example}

In this section we present an example illustrating the proof of Proposition \ref{proposition-key-lemma}.  The arc diagrams
$$
\begin{picture}(32,15)(0,3)
\put(-8,8){$Z:$}
\put(0,4){\line(1,0){32}}
\multiput(3,3)(4,0)7{$\bullet$}
\put(20,4){\oval(16,16)[t]}
\put(14,4){\oval(12,12)[t]}
\put(4,4){\line(0,1){15}}
\put(16,4){\line(0,1){15}}
\put(24,4){\line(0,1){15}}
\put(4,1){\makebox[0mm]{$\scriptstyle 7$}}
\put(8,1){\makebox[0mm]{$\scriptstyle 6$}}
\put(12,1){\makebox[0mm]{$\scriptstyle 5$}}
\put(16,1){\makebox[0mm]{$\scriptstyle 4$}}
\put(20,1){\makebox[0mm]{$\scriptstyle 3$}}
\put(24,1){\makebox[0mm]{$\scriptstyle 2$}}
\put(28,1){\makebox[0mm]{$\scriptstyle 1$}}
\end{picture}
\qquad\qquad\qquad
\begin{picture}(32,15)(0,3)
\put(-8,8){$Y:$}
\put(0,4){\line(1,0){32}}
\multiput(3,3)(4,0)7{$\bullet$}
\put(12,4){\oval(16,16)[t]}
\put(16,4){\oval(16,16)[t]}
\put(14,4){\oval(4,4)[t]}
\put(28,4){\line(0,1){15}}
\put(4,1){\makebox[0mm]{$\scriptstyle 7$}}
\put(8,1){\makebox[0mm]{$\scriptstyle 6$}}
\put(12,1){\makebox[0mm]{$\scriptstyle 5$}}
\put(16,1){\makebox[0mm]{$\scriptstyle 4$}}
\put(20,1){\makebox[0mm]{$\scriptstyle 3$}}
\put(24,1){\makebox[0mm]{$\scriptstyle 2$}}
\put(28,1){\makebox[0mm]{$\scriptstyle 1$}}
\end{picture}
$$
represent the modules
$Y=B_2^{7,3}\oplus B_2^{6,2}\oplus P_2^5\oplus P_0^4\oplus P_1^1$
and $Z= B_2^{6,3}\oplus B_2^{5,1}\oplus P_1^{7}\oplus  P_1^4 \oplus  P_1^2$.

We first compute the Hom-matrix to verify that $Y\leq_{\sf hom}Z$.
In this matrix, the entries on the first diagonal represent the numbers
$\delta H_{B_2^{3,1}},\delta H_{B_2^{4,1}},\ldots$.
By Lemma~\ref{lemma-zeros}, there are also zeros along the
two top rows, which we indicate by solid lines.

$$
\beginpicture\setcoordinatesystem units <.4cm,.4cm>
\multiput{} at 0 0  4 4 /
\put{$\delta H(Y,Z):$} at -3 2
\plot -.3 3.9  6.5 3.9 /
\plot -.3 4.1  6.5 4.1  /
\multiput{$\scriptstyle 0$} at 0 3.5  1 3.5  2 3.5  4 3.5  5 3.5
                               .5 3  1.5 3  4.5 3  6.5 3
                               1 2.5  3 2.5  6 2.5
                               1.5 2  5.5 2
                               2 1.5  /
\multiput{$\scriptstyle 1$} at 3 3.5
                               2.5 3  3.5 3
                               2 2.5  4 2.5
                               2.5 2  3.5 2
                               5 1.5
                               2.5 1  4.5 1
                               3.5 0  /
\multiput{$\scriptstyle 2$} at	3 1.5 
                                 4 .5 /
\multiput{$\cdot$} at        2.8 .7  3 .5  3.2 .3
                             3.3 1.2  3.5 1  3.7 .8
                             3.8 1.7  4 1.5  4.2 1.3
                             4.3 2.2  4.5 2  4.7 1.8
                             4.8 2.7  5 2.5  5.2 2.3
                             6.8 3.3  7 3.5  7.2 3.7 /
\endpicture
$$

We see that $Y\leq_{\sf hom}Z$. We show that $Y\leq_{\sf arc}Z$.

Following Steps 1,2,3 in the proof of Proposition \ref{proposition-key-lemma},  we choose the smaller 
parallelogram in the Hom-matrix, as indicated.

$$
\beginpicture\setcoordinatesystem units <.4cm,.4cm>
\multiput{} at 0 0  4 4 /
\put{$\delta H(Y,Z):$} at -3 2
\plot -.3 3.9  6.5 3.9 /
\plot -.3 4.1  6.5 4.1  /
\multiput{$\scriptstyle 0$} at 0 3.5  1 3.5  2 3.5  4 3.5  5 3.5
                               .5 3  1.5 3  4.5 3  6.5 3
                               1 2.5  3 2.5  6 2.5
                               1.5 2  5.5 2
                               2 1.5 /
\multiput{$\scriptstyle 1$} at 3 3.5
                               2.5 3  3.5 3
                               2 2.5  4 2.5
                               2.5 2  3.5 2
                               5 1.5
                               2.5 1  4.5 1
                               3.5 0  /
\multiput{$\scriptstyle 2$} at	3 1.5 
                                 4 .5 /
\multiput{$\cdot$} at        2.8 .7  3 .5  3.2 .3
                             3.3 1.2  3.5 1  3.7 .8
                             3.8 1.7  4 1.5  4.2 1.3
                             4.3 2.2  4.5 2  4.7 1.8
                             4.8 2.7  5 2.5  5.2 2.3
                             6.8 3.3  7 3.5  7.2 3.7 /
\setdots<2pt>
\plot 2.5 3.5  1.5 2.5  2 2  3 3  2.5 3.5 /
\endpicture
$$

%

The corresponding operation of type {\bf (A)}
replaces the bipickets $B_2^{5,1}$ and $B_2^{6,3}$ in $Z$ (representing arcs
from 5 to 1 and from 6 to 3, respectively)
by bipickets $B_2^{6,1}$ and $B_2^{5,3}$ in $Z'$, so $Z'$ is given by
the following arc diagram.  We also indicate the new Hom-matrix.

$$
\begin{picture}(32,15)(0,3)
\put(-8,8){$Z':$}
\put(0,4){\line(1,0){32}}
\multiput(3,3)(4,0)7{$\bullet$}
\put(18,6){\oval(20,20)[t]}
\put(16,4){\oval(8,8)[t]}
\put(4,4){\line(0,1){15}}
\put(16,4){\line(0,1){15}}
\put(24,4){\line(0,1){15}}
\put(8,4){\line(0,1){2}}
\put(28,4){\line(0,1){2}}
\put(4,1){\makebox[0mm]{$\scriptstyle 7$}}
\put(8,1){\makebox[0mm]{$\scriptstyle 6$}}
\put(12,1){\makebox[0mm]{$\scriptstyle 5$}}
\put(16,1){\makebox[0mm]{$\scriptstyle 4$}}
\put(20,1){\makebox[0mm]{$\scriptstyle 3$}}
\put(24,1){\makebox[0mm]{$\scriptstyle 2$}}
\put(28,1){\makebox[0mm]{$\scriptstyle 1$}}
\end{picture}
\qquad\qquad
\beginpicture\setcoordinatesystem units <.4cm,.4cm>
\multiput{} at 0 0  4 4 /
\put{$\delta H(Y,Z'):$} at -3 2
\plot -.3 3.9  6.5 3.9 /
\plot -.3 4.1  6.5 4.1  /
\multiput{$\scriptstyle 0$} at 0 3.5  1 3.5  2 3.5  4 3.5  5 3.5
                                  .5 3  1.5 3  4.5 3  6.5 3         2.5 3
                                       1 2.5  3 2.5  6 2.5          2 2.5
                                           1.5 2  5.5 2
                                             2 1.5  /
\multiput{$\scriptstyle 1$} at 3 3.5
                                      3.5 3
                                      4 2.5
                               2.5 2  3.5 2
                               5 1.5
                               2.5 1  4.5 1
                               3.5 0  /
\multiput{$\scriptstyle 2$} at	3 1.5 
                                 4 .5 /
\multiput{$\cdot$} at        2.8 .7  3 .5  3.2 .3
                             3.3 1.2  3.5 1  3.7 .8
                             3.8 1.7  4 1.5  4.2 1.3
                             4.3 2.2  4.5 2  4.7 1.8
                             4.8 2.7  5 2.5  5.2 2.3
                             6.8 3.3  7 3.5  7.2 3.7 /
\setdots<2pt>
\plot   2.5 2.5  2 2  4 0  4.5 0.5  2.5 2.5   /
\endpicture
$$

Following Steps 1,2,3 in the proof of Proposition \ref{proposition-key-lemma}, we choose a~parallelogram 
(alternatively, we can choose the parallelogram
given by the upper diagonal in the region marked by the 1's)
and perform the operation indicated:  Replace in $Z'$
the bipicket $B_2^{6,1}$ and picket $P_1^{2}$ by bipicket $B_2^{6,2}$ and picket $P_1^{1}$
in $Z''$.  Arc diagram and Hom-matrix are as follows.

$$
\begin{picture}(32,15)(0,3)
\put(-8,8){$Z'':$}
\put(0,4){\line(1,0){32}}
\multiput(3,3)(4,0)7{$\bullet$}
\put(16,4){\oval(16,16)[t]}
\put(16,4){\oval(8,8)[t]}
\put(4,4){\line(0,1){15}}
\put(16,4){\line(0,1){15}}
\put(28,4){\line(0,1){15}}
\put(4,1){\makebox[0mm]{$\scriptstyle 7$}}
\put(8,1){\makebox[0mm]{$\scriptstyle 6$}}
\put(12,1){\makebox[0mm]{$\scriptstyle 5$}}
\put(16,1){\makebox[0mm]{$\scriptstyle 4$}}
\put(20,1){\makebox[0mm]{$\scriptstyle 3$}}
\put(24,1){\makebox[0mm]{$\scriptstyle 2$}}
\put(28,1){\makebox[0mm]{$\scriptstyle 1$}}
\end{picture}
\qquad\qquad
\beginpicture\setcoordinatesystem units <.4cm,.4cm>
\multiput{} at 0 0  4 4 /
\put{$\delta H(Y,Z''):$} at -3 2
\plot -.3 3.9  6.5 3.9 /
\plot -.3 4.1  6.5 4.1  /
\multiput{$\scriptstyle 0$} at 0 3.5  1 3.5  2 3.5  4 3.5  5 3.5
                                  .5 3  1.5 3  4.5 3  6.5 3         2.5 3
                                       1 2.5  3 2.5  6 2.5          2 2.5
                                           1.5 2  5.5 2             2.5 2
                                             2 1.5 /
\multiput{$\scriptstyle 1$} at 3 3.5
                                      3.5 3
                                      4 2.5
                                      3.5 2
                               3 1.5  5 1.5
                                      2.5 1 4.5 1
												  4 0.5
                                      3.5 0 /   
\multiput{$\cdot$} at        2.8 .7  3 .5  3.2 .3
                             3.3 1.2  3.5 1  3.7 .8
                             3.8 1.7  4 1.5  4.2 1.3
                             4.3 2.2  4.5 2  4.7 1.8
                             4.8 2.7  5 2.5  5.2 2.3
                             6.8 3.3  7 3.5  7.2 3.7 /
\setdots<2pt>
\plot 2 1  2.5 1.5  4 0  3.5 -0.5  2 1 /
\endpicture
$$

The new parallelogram corresponds to move of a type {\bf (E)},
so we replace 
the pickets $P_1^{7}$ and $P_1^{1}$ in $Z''$ by bipicket $B_2^{7,1}$
in $Z'''$.  Arc diagram and Hom-matrix are as follows.

$$
\begin{picture}(32,15)(0,3)
\put(-8,8){$Z''':$}
\put(0,4){\line(1,0){32}}
\multiput(3,3)(4,0)7{$\bullet$}
\put(16,4){\oval(24,24)[t]}
\put(16,4){\oval(16,16)[t]}
\put(16,4){\oval(8,8)[t]}
\put(16,4){\line(0,1){15}}
\put(4,1){\makebox[0mm]{$\scriptstyle 7$}}
\put(8,1){\makebox[0mm]{$\scriptstyle 6$}}
\put(12,1){\makebox[0mm]{$\scriptstyle 5$}}
\put(16,1){\makebox[0mm]{$\scriptstyle 4$}}
\put(20,1){\makebox[0mm]{$\scriptstyle 3$}}
\put(24,1){\makebox[0mm]{$\scriptstyle 2$}}
\put(28,1){\makebox[0mm]{$\scriptstyle 1$}}
\end{picture}
\qquad\qquad
\beginpicture\setcoordinatesystem units <.4cm,.4cm>
\multiput{} at 0 0  4 4 /
\put{$\delta H(Y,Z''):$} at -3 2
\plot -.3 3.9  6.5 3.9 /
\plot -.3 4.1  6.5 4.1  /
\multiput{$\scriptstyle 0$} at 0 3.5  1 3.5  2 3.5  4 3.5  5 3.5
                                  .5 3  1.5 3  4.5 3  6.5 3         2.5 3
                                       1 2.5  3 2.5  6 2.5          2 2.5
                                           1.5 2  5.5 2             2.5 2
                                             2 1.5
                                                 2.5 1
                                                    3.5 0 /
\multiput{$\scriptstyle 1$} at 3 3.5
                                      3.5 3
                                      4 2.5
                                      3.5 2
                               3 1.5  5 1.5
                                      4.5 1
                                      4 .5 /
\multiput{$\cdot$} at        2.8 .7  3 .5  3.2 .3
                             3.3 1.2  3.5 1  3.7 .8
                             3.8 1.7  4 1.5  4.2 1.3
                             4.3 2.2  4.5 2  4.7 1.8
                             4.8 2.7  5 2.5  5.2 2.3
                             6.8 3.3  7 3.5  7.2 3.7 /
\setdots<2pt>
\plot 3 4  5.5 1.5  5 1  2.5 3.5  3 4 /
\endpicture
$$

The parallelogram gives an operation of type {\bf (B)}, namly
we have to replace the bipicket $B_2^{5,3}$ and the picket $P_1^4$
in $Z'''$ by pickets $P_2^5$, $P_0^4$ and $P_1^3$ in $Z^{IV}$
(note that the first two pickets represent an arc from 5 to 4,
the last is a pole at 3).

$$
\begin{picture}(32,15)(0,3)
\put(-8,8){$Z^{IV}:$}
\put(0,4){\line(1,0){32}}
\multiput(3,3)(4,0)7{$\bullet$}
\put(16,4){\oval(24,24)[t]}
\put(16,4){\oval(16,16)[t]}
\put(14,4){\oval(4,4)[t]}
\put(20,4){\line(0,1){15}}
\put(4,1){\makebox[0mm]{$\scriptstyle 7$}}
\put(8,1){\makebox[0mm]{$\scriptstyle 6$}}
\put(12,1){\makebox[0mm]{$\scriptstyle 5$}}
\put(16,1){\makebox[0mm]{$\scriptstyle 4$}}
\put(20,1){\makebox[0mm]{$\scriptstyle 3$}}
\put(24,1){\makebox[0mm]{$\scriptstyle 2$}}
\put(28,1){\makebox[0mm]{$\scriptstyle 1$}}
\end{picture}
\qquad\qquad
\beginpicture\setcoordinatesystem units <.4cm,.4cm>
\multiput{} at 0 0  4 4 /
\put{$\delta H(Y,Z^{IV}):$} at -3 2
\plot -.3 3.9  6.5 3.9 /
\plot -.3 4.1  6.5 4.1  /
\multiput{$\scriptstyle 0$} at 0 3.5  1 3.5  2 3.5  4 3.5  5 3.5      3 3.5
                                  .5 3  1.5 3  4.5 3  6.5 3    2.5 3  3.5 3
                                       1 2.5  3 2.5  6 2.5     2 2.5  4 2.5
                                           1.5 2  5.5 2        2.5 2
                                             2 1.5                    5 1.5
                                                 2.5 1
                                                    3.5 0 /
\multiput{$\scriptstyle 1$} at %
                                      3.5 2
                               3 1.5
                                      4.5 1
                                      4 .5 /
\multiput{$\cdot$} at        2.8 .7  3 .5  3.2 .3
                             3.3 1.2  3.5 1  3.7 .8
                             3.8 1.7  4 1.5  4.2 1.3
                             4.3 2.2  4.5 2  4.7 1.8
                             4.8 2.7  5 2.5  5.2 2.3
                             6.8 3.3  7 3.5  7.2 3.7 /
\setdots<2pt>
\plot 2.5 1.5  3.5 2.5   5 1  4 0  2.5 1.5 /
\endpicture
$$

Finally, an operation of type {\bf (B)} reduces the Hom-matrix to zero
and yields the module $Y$:  We replace the bipicket $B_2^{7,1}$ and the
picket $P_1^3$ in $Z^{IV}$ by the bipicket $B_2^{7,3}$ and the picket $P_1^1$
for $Y$.  We are done:

$$Y\leq_{\sf arc} Z^{IV}\leq_{\sf arc} Z''' \leq_{\sf arc}  Z'' \leq_{\sf arc}  Z' \leq_{\sf arc} Z $$

\section{The poset $({}_2\mathcal{S}_{\gamma}^{\beta},\leq_{\sf arc})$}\label{section-poset-arc}

We discuss the maximal and minimal elements in the poset  
$({}_2\mathcal{S}_{\gamma}^{\beta},\leq_{\sf arc})$.  First, we give an example.

\subsection{An example}

\begin{ex}
We present the Hasse diagram of the poset $({}_2\mathcal{S}_{(3,2,1,1)}^{(4,3,3,2,1)},\leq_{\sf arc})$.
The numbers given on the right hand side are the dimensions of $\mathbb{V}_\Delta$ for diagrams $\Delta$
in the same row. These dimensions are computed using Theorem \ref{thm-second-main}.

$$\begin{picture}(110,75)(0,-10)
	\put(120,-5){
		\put(0,-2){\makebox{$160$}}
		\put(0,18){\makebox{$159$}}
		\put(0,33){\makebox{$158$}}		
		\put(0,48){\makebox{$157$}}
		\put(0,63){\makebox{$156$}}
	}
	\put(30,-5){
    \begin{picture}(20,8)(0,5)
      \put(0,4){\line(1,0){20}}
      \multiput(3,3)(4,0)4{$\bullet$}
      \put(6,4){\oval(4,4)[t]}
      \put(14,4){\oval(4,4)[t]}
		\put(8,4){\beginpicture
					\circulararc 360 degrees from 0 0 center at 0 -6.5
					\endpicture}
    \put(3,1){\makebox{$\scriptstyle 4$}}
    \put(7,1){\makebox{$\scriptstyle 3$}}
    \put(11,1){\makebox{$\scriptstyle 2$}}
    \put(15,1){\makebox{$\scriptstyle 1$}}
    
    \end{picture}
  }
  \put(60,-5){
  \begin{picture}(20,8)(0,5)
      \put(0,4){\line(1,0){20}}
      \multiput(3,3)(4,0)4{$\bullet$}
      \put(10,4){\oval(12,12)[t]}
      \put(10,4){\oval(4,4)[t]}
		\put(8,4){\beginpicture
					\circulararc 360 degrees from 0 0 center at 0 -6.5
					\endpicture}
    \put(3,1){\makebox{$\scriptstyle 4$}}
    \put(7,1){\makebox{$\scriptstyle 3$}}
    \put(11,1){\makebox{$\scriptstyle 2$}}
    \put(15,1){\makebox{$\scriptstyle 1$}}
    \end{picture}
  }
  \put(0,15){\begin{picture}(20,8)(0,5)
      \put(0,4){\line(1,0){20}}
      \multiput(3,3)(4,0)4{$\bullet$}
      \put(14,4){\oval(4,4)[t]}
      \put(4,4){\line(0,1){7}}
      \put(8,4){\line(0,1){7}}
		\put(8,4){\beginpicture
					\circulararc 360 degrees from 0 0 center at 0 -6.5
					\endpicture}
    \put(3,1){\makebox{$\scriptstyle 4$}}
    \put(7,1){\makebox{$\scriptstyle 3$}}
    \put(11,1){\makebox{$\scriptstyle 2$}}
    \put(15,1){\makebox{$\scriptstyle 1$}}
    
    \end{picture}
  }
  \put(30,15){
  \begin{picture}(20,8)(0,5)
      \put(0,4){\line(1,0){20}}
      \multiput(3,3)(4,0)4{$\bullet$}
      \put(10,4){\oval(4,4)[t]}
      \put(4,4){\line(0,1){7}}
      \put(16,4){\line(0,1){7}}
	\put(8,4){\beginpicture
					\circulararc 360 degrees from 0 0 center at 0 -6.5
					\endpicture}
    \put(3,1){\makebox{$\scriptstyle 4$}}
    \put(7,1){\makebox{$\scriptstyle 3$}}
    \put(11,1){\makebox{$\scriptstyle 2$}}
    \put(15,1){\makebox{$\scriptstyle 1$}}
    \end{picture}}
    \put(60,15){
    \begin{picture}(20,8)(0,5)
      \put(0,4){\line(1,0){20}}
      \multiput(3,3)(4,0)4{$\bullet$}
      \put(12,4){\oval(8,8)[t]}
      \put(8,4){\oval(8,8)[t]}
	\put(8,4){\beginpicture
					\circulararc 360 degrees from 0 0 center at 0 -6.5
					\endpicture}
    \put(3,1){\makebox{$\scriptstyle 4$}}
    \put(7,1){\makebox{$\scriptstyle 3$}}
    \put(11,1){\makebox{$\scriptstyle 2$}}
    \put(15,1){\makebox{$\scriptstyle 1$}}
    
    \end{picture}
    }
    \put(90,15){
    \begin{picture}(20,8)(0,5)
      \put(0,4){\line(1,0){20}}
      \multiput(3,3)(4,0)4{$\bullet$}
      \put(6,4){\oval(4,4)[t]}
      \put(12,4){\line(0,1){7}}
      \put(16,4){\line(0,1){7}}
		\put(8,4){\beginpicture
					\circulararc 360 degrees from 0 0 center at 0 -6.5
					\endpicture}
    \put(3,1){\makebox{$\scriptstyle 4$}}
    \put(7,1){\makebox{$\scriptstyle 3$}}
    \put(11,1){\makebox{$\scriptstyle 2$}}
    \put(15,1){\makebox{$\scriptstyle 1$}}
    \end{picture}
    }
    \put(15,30){
    \begin{picture}(20,8)(0,5)
      \put(0,4){\line(1,0){20}}
      \multiput(3,3)(4,0)4{$\bullet$}
      \put(12,4){\oval(8,8)[t]}
      \put(4,4){\line(0,1){7}}
      \put(12,4){\line(0,1){7}}
		\put(8,4){\beginpicture
					\circulararc 360 degrees from 0 0 center at 0 -6.5
					\endpicture}
    \put(3,1){\makebox{$\scriptstyle 4$}}
    \put(7,1){\makebox{$\scriptstyle 3$}}
    \put(11,1){\makebox{$\scriptstyle 2$}}
    \put(15,1){\makebox{$\scriptstyle 1$}}
    \end{picture}
    }
    \put(75,30){
    \begin{picture}(20,8)(0,5)
      \put(0,4){\line(1,0){20}}
      \multiput(3,3)(4,0)4{$\bullet$}
      \put(8,4){\oval(8,8)[t]}
      \put(8,4){\line(0,1){7}}
      \put(16,4){\line(0,1){7}}
		\put(8,4){\beginpicture
					\circulararc 360 degrees from 0 0 center at 0 -6.5
					\endpicture}
    \put(3,1){\makebox{$\scriptstyle 4$}}
    \put(7,1){\makebox{$\scriptstyle 3$}}
    \put(11,1){\makebox{$\scriptstyle 2$}}
    \put(15,1){\makebox{$\scriptstyle 1$}}
    \end{picture}
    }
    \put(45,45){
    \begin{picture}(20,8)(0,5)
      \put(0,4){\line(1,0){20}}
      \multiput(3,3)(4,0)4{$\bullet$}
      \put(10,4){\oval(12,12)[t]}
      \put(12,4){\line(0,1){7}}
      \put(8,4){\line(0,1){7}}
		\put(8,4){\beginpicture
					\circulararc 360 degrees from 0 0 center at 0 -6.5
					\endpicture}
    \put(3,1){\makebox{$\scriptstyle 4$}}
    \put(7,1){\makebox{$\scriptstyle 3$}}
    \put(11,1){\makebox{$\scriptstyle 2$}}
    \put(15,1){\makebox{$\scriptstyle 1$}} 
    \end{picture}
    }
    \put(45,60){
\begin{picture}(20,8)(0,5)
      \put(0,4){\line(1,0){20}}
      \multiput(3,3)(4,0)4{$\bullet$}
      \put(4,4){\line(0,1){7}}
		\put(8,4){\beginpicture
					\circulararc 360 degrees from 0 0 center at 0 -6.5
					\endpicture}
      \put(8,4){\line(0,1){7}}
      \put(12,4){\line(0,1){7}}
      \put(16,4){\line(0,1){7}}
    \put(3,1){\makebox{$\scriptstyle 4$}}
    \put(7,1){\makebox{$\scriptstyle 3$}}
    \put(11,1){\makebox{$\scriptstyle 2$}}
    \put(15,1){\makebox{$\scriptstyle 1$}}
    \end{picture}
    }
    {\color[rgb]{0.1,0.1,0.7}
  \put(25,25){\line(-1,-1){6}}
  \put(30,25){\line(1,-1){6}}
  \put(80,25){\line(-5,-1){30}}
  \put(90,25){\line(1,-1){6}}
  \put(52.5,40){\line(-3,-1){18}}
  \put(57.5,40){\line(3,-1){18}}}
  {\color[rgb]{0.1,0.7,0.1}
  \put(72.5,10){\line(0,-1){8}}
  \put(67.5,10){\line(-5,-2){24}}}
  {\color[rgb]{0.9,0.1,0.1}
  \put(42.5,10){\line(5,-2){24}}
  \put(12.5,10){\line(5,-2){24}}
  \put(97.5,10){\line(-5,-1){48}}
  \put(30,25){\line(5,-1){32}}
  \put(80,25){\line(-1,-1){6}}
  \put(53.5,55){\line(0,-1){4}}}
  \end{picture}$$
\end{ex}

\subsection{Maximal and minimal elements}

\begin{prop}
 Let $\gamma,\beta$ be partitions. 
 \begin{enumerate}
  \item In the poset $({}_2\mathcal{S}_{\gamma}^{\beta},\leq_{\sf arc})$ there exists exactly one maximal element $M$.
  Its arc diagram has only poles and loops.
  \item If $\beta\setminus \gamma$ is vertical strip, then in the poset $({}_2\mathcal{S}_{\gamma}^{\beta},\leq_{\sf arc})$ there are exactly $c_{\alpha,\gamma}^\beta$ 
  minimal elements, where $c_{\alpha,\gamma}^\beta$ is the Littlewood-Richardson coefficient,
  $\alpha=(2,2,\ldots,2)$, if $|\beta|\setminus |\gamma|$ is even and $\alpha=(2,2,\ldots,2,1)$, 
  if $|\beta|\setminus |\gamma|$ is odd.
  Its arc diagram has no crossings, has only arcs, at most one pole, and may contain loops.
 \end{enumerate}
\label{prop-comb-prop}
\end{prop}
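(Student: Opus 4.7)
The plan is to treat the two parts combinatorially.

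For part~(1), I would first establish existence by an arc-splitting procedure. Given any $X\in{}_2\mathcal{S}_\gamma^\beta$, every arc in $\Delta(X)$---whether a genuine bipicket $B_2^{m,r}$ or an adjacent pair $P_2^m\oplus P_0^{m-1}$---can be replaced by the two poles $P_1^m\oplus P_1^r$ via move~{\bf (E)}, which preserves the type $(\beta,\gamma)$ (the Hom-matrix computation of Section~\ref{section-operations} extends to the adjacent case $m=r+1$). Different arcs in $\Delta(X)$ use disjoint indecomposable summands, so these replacements commute and iterate to a well-defined $\leq_{\sf arc}$-larger object $M\in{}_2\mathcal{S}_\gamma^\beta$ whose diagram contains only poles and loops; since every move {\bf (A)}--{\bf (E)} applied in the increasing direction requires an arc in the diagram, $M$ is maximal. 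For uniqueness I would observe that, for picket-only decompositions, the only non-trivial $(\beta,\gamma)$-preserving local swap is the explosion swap $P_2^m\oplus P_0^{m-1}\leftrightarrow P_1^m\oplus P_1^{m-1}$; alternatively, processing the indices $k$ from large to small, the non-negativity constraints imply that the multiplicities $(a_k,b_k,c_k)$ of the summands $(P_0^k,P_1^k,P_2^k)$ in a pole-and-loop decomposition are uniquely determined by the equations $a_k+b_k+c_k=m_k$, $a_k+b_{k+1}+c_{k+2}=g_k$ together with $c_k\cdot a_{k-1}=0$.

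For part~(2) I would first characterise the minimal elements by inspecting the downward direction of each of the five moves. Moves~{\bf (A)} and {\bf (C)} require two interlocking arcs, moves~{\bf (B)} and {\bf (D)} require a pole lying strictly inside an arc, and move~{\bf (E)} applied downwards merges two poles at distinct positions into a single arc. Hence a minimal element has no arc-crossings, no pole under any arc, and at most one pole. Since only move~{\bf (E)} alters $\alpha$ and its downward version replaces two parts of size~$1$ of $\alpha$ by one part of size~$2$, all minimal elements share the same $\alpha$, namely the unique partition of $|\beta|-|\gamma|$ into parts $\leq 2$ with maximal number of $2$'s, which is exactly the partition stated.

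The main obstacle is the Littlewood-Richardson count. By Klein's theorem (cf.\ \cite{klein}, \cite{kos-sch}), the isomorphism classes in $\mathcal{S}_{\alpha,\gamma}^{\beta}$ for the specified $\alpha$ correspond bijectively to Littlewood-Richardson tableaux of shape $\beta/\gamma$ and content $\alpha$, giving $c_{\alpha,\gamma}^\beta$ classes in total. It therefore suffices to show that, under the vertical-strip hypothesis on $\beta/\gamma$, every such isomorphism class yields a minimal element of ${}_2\mathcal{S}_\gamma^\beta$. This is the crucial place where the hypothesis is used: the vertical-strip condition (at most one new cell per row of $\beta/\gamma$) forces the pairings of $\beta$-rows induced by the arcs $B_2^{m,r}$ to be nested, excluding crossings, and leaves no room for a pole to be trapped inside an arc. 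The verification would proceed by induction on the number of arcs, with a case analysis of the possible row pairings; this step is the technical heart of the proof and the one where counterexamples arise as soon as the vertical-strip assumption is dropped.
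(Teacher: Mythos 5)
Your Part~1 is essentially the paper's argument: explode every arc by move {\bf (E)} to reach a pole-and-loop diagram, then observe that such a diagram is determined by $(\beta,\gamma)$; your multiplicity equations for $(a_k,b_k,c_k)$ merely make explicit the paper's one-line claim that the positions of poles and loops are forced. Your characterisation of minimal elements in Part~2 (no crossing, no pole strictly under an arc, at most one pole) is also correct and matches the paper's use of the vertical-strip hypothesis to exclude a double pole, which the downward move {\bf (E)} could not merge.

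The gap is in the count. First, Klein's theorem puts the isomorphism classes in $\mathcal S^\beta_{\alpha,\gamma}$ in bijection with \emph{Klein} tableaux, not with Littlewood--Richardson tableaux; when $\alpha$ has several parts equal to $2$ there are in general strictly more Klein tableaux than LR tableaux, so the number of classes with the minimal $\alpha$ is not $c_{\alpha,\gamma}^\beta$. Second, it is false that every class with that $\alpha$ is minimal, and false that the vertical-strip hypothesis excludes crossings or trapped poles. Take $\beta=(4,3,2,1)$, $\gamma=(3,2,1)$, so $\beta\setminus\gamma$ is a vertical strip and the minimal $\alpha$ is $(2,2)$. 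There are three isomorphism classes with this $\alpha$, namely $B_2^{4,2}\oplus B_2^{3,1}$ (crossing arcs), $B_2^{4,1}\oplus P_2^{3}\oplus P_0^{2}$ (nested arcs) and $P_2^{4}\oplus P_0^{3}\oplus P_2^{2}\oplus P_0^{1}$ (disjoint arcs); only the last two are minimal, and indeed $c_{(2,2),(3,2,1)}^{(4,3,2,1)}=2$. So your plan ``count all classes with this $\alpha$ and show each is minimal'' cannot work: the hypothesis only controls poles, not crossings, and crossings must genuinely be eliminated by downward moves {\bf (A)}/{\bf (C)}.

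The route the paper takes is to stop after establishing ``at most one pole, hence $\alpha$ is as stated'', note that no further downward move {\bf (E)} is available while moves {\bf (A)}--{\bf (D)} preserve $\alpha$, and then quote \cite[Theorem 5.7]{kos-sch}, which asserts that the $\leq_{\sf arc}$-minimal elements of the fixed-$\alpha$ poset are in bijection with the LR tableaux of shape $\beta/\gamma$ and content $\alpha$. If you want a self-contained argument you must reprove that statement, i.e., show that each LR tableau underlies exactly one crossing-free diagram with no pole under an arc; your proposed induction would have to be carried out at that level rather than at the level of all isomorphism classes.
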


\begin{proof}
\begin{enumerate}
\item We start with the observation that any  object with an~arc can be modified by the~move {\bf (E)}.
In this way we obtain a~bigger object in our poset. It follows that each maximal element 
in the poset  $({}_2\mathcal{S}_{\gamma}^{\beta},\leq_{\sf arc})$ has no arc. 
The location of loops in the arc diagram is determined by the partitions $\beta$ and $\gamma$ so 
there exists exactly one maximal object, it has the required properties.
\item If the tableau is a~vertical strip, then in the arc diagram 
  there is no double pole at any given point.
Therefore, if there are at least two poles in an~arc diagram, we can choose two poles in the points $m$ and $r$ such that $m\neq r$.
It follows that, applying the move {\bf (E)}, we can obtain a~smaller object in our poset. 
We conclude that each minimal element in the poset  $({}_2\mathcal{S}_{\gamma}^{\beta},\leq_{\sf arc})$ 
has at most one pole. By \cite[Theorem 5.7]{kos-sch}, 
in the poset $({}_2\mathcal{S}_{\gamma}^{\beta},\leq_{\sf arc})$ there are exactly $c_{\alpha,\gamma}^\beta$ 
  minimal elements, where $c_{\alpha,\gamma}^\beta$ is the Littlewood-Richardson coefficient,
  $\alpha=(2,2,\ldots,2)$, if $|\beta|\setminus |\gamma|$ is even and $\alpha=(2,2,\ldots,2,1)$, 
  if $|\beta|\setminus |\gamma|$ is odd.
\end{enumerate}
\end{proof}

The following example shows that the assumption about the vertical strip
in Statement~2 of Proposition~\ref{prop-comb-prop} is necessary.

\begin{ex}
The poset ${}_{2}{\mathcal S}_{(2,2,1)}^{(3,3,2,1)}$ looks as follows.

 $$\begin{picture}(75,60)(0,-10)
  \put(30,0){
    	\begin{picture}(16,8)(0,3)
		\put(0,4){\line(1,0){16}}
		\multiput(3,3)(4,0)3{$\bullet$}
		\put(8,4){\oval(8,8)[t]}
      \put(6,4){\oval(4,4)[t]}
		\put(4,1){\makebox[0mm]{$\scriptstyle 3$}}
		\put(8,1){\makebox[0mm]{$\scriptstyle 2$}}
		\put(12,1){\makebox[0mm]{$\scriptstyle 1$}}
		\end{picture}
  }
  \put(30,15){ 
    	\begin{picture}(16,8)(0,3)
		\put(0,4){\line(1,0){16}}
		\multiput(3,3)(4,0)3{$\bullet$}
		\put(12,4){\line(0,1){7}}
		\put(4,4){\line(0,1){7}}
      \put(6,4){\oval(4,4)[t]}
		\put(4,1){\makebox[0mm]{$\scriptstyle 3$}}
		\put(8,1){\makebox[0mm]{$\scriptstyle 2$}}
		\put(12,1){\makebox[0mm]{$\scriptstyle 1$}}
		\end{picture}}
    \put(60,15){
		\begin{picture}(16,8)(0,3)
		\put(0,4){\line(1,0){16}}
		\multiput(3,3)(4,0)3{$\bullet$}
		\put(4,4){\line(1,6){1.15}}
		\put(4,4){\line(-1,6){1.15}}
		\put(10,4){\oval(4,4)[t]}
		\put(4,1){\makebox[0mm]{$\scriptstyle 3$}}
		\put(8,1){\makebox[0mm]{$\scriptstyle 2$}}
		\put(12,1){\makebox[0mm]{$\scriptstyle 1$}}
		\end{picture}
    }
    \put(45,30){	
		\begin{picture}(16,8)(0,3)
		\put(0,4){\line(1,0){16}}
		\multiput(3,3)(4,0)3{$\bullet$}
		\put(8,4){\line(0,1){7}}
		\put(4,4){\line(0,1){7}}
		\put(8,4){\oval(8,8)[t]}
		\put(4,1){\makebox[0mm]{$\scriptstyle 3$}}
		\put(8,1){\makebox[0mm]{$\scriptstyle 2$}}
		\put(12,1){\makebox[0mm]{$\scriptstyle 1$}}
		\end{picture}
    }    
	\put(45,45){
		\begin{picture}(16,8)(0,3)
		\put(0,4){\line(1,0){16}}
		\multiput(3,3)(4,0)3{$\bullet$}
		\put(12,4){\line(0,1){7}}
		\put(8,4){\line(0,1){7}}
		\put(4,4){\line(1,6){1.15}}
		\put(4,4){\line(-1,6){1.15}}
		\put(4,1){\makebox[0mm]{$\scriptstyle 3$}}
		\put(8,1){\makebox[0mm]{$\scriptstyle 2$}}
		\put(12,1){\makebox[0mm]{$\scriptstyle 1$}}
		\end{picture}
    }
  {\color[rgb]{0.1,0.7,0.1}
  \put(50.5,27){\line(-2,-1){7}}
  \put(54.5,27){\line(2,-1){7}}}
  {\color[rgb]{0.9,0.1,0.1}
  \put(37.5,11){\line(0,-1){4}}
  \put(53.5,41){\line(0,-1){4}}}
  \end{picture}$$
\end{ex}

\section{Two non-examples}\label{section-non-examples}

In the paper we fixed two partitions, $\beta$ and $\gamma$ and considered 
the variety $_2\mathbb V_\gamma^\beta$.  
We present two examples which show that similar results cannot be obtained
if we fix two other partitions, (that is, either $\alpha$ and $\gamma$ or $\alpha$ and $\beta$).

\subsection{Fixing partitions $\alpha$ and $\beta$}

\begin{ex}
Let $\alpha,\beta$ be partitions such that $\alpha_1\leq 2$. 
We consider the full subcategory $_{\alpha}{\mathcal S}^\beta$ of $\mathcal S_2$ consisting of all objects $(N_\alpha,N_\beta,f)$. 
For $\alpha=(2)$ and $\beta=(m,r),$ where  $m>r+1$ we define two objects 
$X=B_2^{m,r}$ and $Y=P_0^{m}\oplus P_2^{r}$ in $_{\alpha}{\mathcal S}^\beta$. 
Using the table from Section~\ref{chapter-hom-tab} we compute that 
$$Y\leq_{\sf hom} X.$$
The object $X$ is indecomposable, so it is not true that:
$$Y\leq_{\sf ext} X.$$ 
\end{ex}

\subsection{Fixing partitions $\alpha$ and $\gamma$}

\begin{ex}
Let $\alpha, \gamma$ be partitions such that $\alpha_1\leq 2$. 
We consider the full subcategory $_{\alpha}{\mathcal S}_\gamma$ of $\mathcal S_2$ consisting of all objects $(N_\alpha,N_\beta,f)$, 
such that $\Coker f\simeq N_\gamma$. For $\alpha=(2)$ and $\gamma=(m-1,r-1),$
where  $m>r+1$ we define two objects $X=B_2^{m,r}$ and $Y=P_2^{m+1}\oplus P_0^{r-1}$ in $_{\alpha}{\mathcal S}_\gamma$. 
Using the table from section \ref{chapter-hom-tab} we compute that 
$$Y\leq_{\sf hom} X.$$
The object $X$ is indecomposable, so it is not true that:
$$Y\leq_{\sf ext} X.$$
 
\end{ex}

\section{The dimensions of the orbits}\label{sec-dim-orbit}

In this section we present the proof of Theorem \ref{thm-second-main}.

\subsection{Previous results}

We briefly recall notation and a result from \cite{kos-sch}. 

\smallskip
We consider the affine variety ${\mathbb H}_\alpha^\beta(k)=\allowbreak\Hom_k(N_\alpha(k),\allowbreak N_\beta(k))$
(consisting of all $|\beta|\times |\alpha|-$matrices with coefficients in $k$).
On ${\mathbb H}_\alpha^\beta(k)$ we consider the Zariski topology and on all subsets of
${\mathbb H}_\alpha^\beta(k)$ we work with the induced topology. Let ${\mathbb V}_{\alpha,\gamma}^\beta(k)$
be the subset of ${\mathbb H}_\alpha^\beta(k)$ consisting of all matrices that define a~monomorphism
$f:N_\alpha\to N_\beta$
in the category $\mathcal{N}(k)$ with $\Coker\,f\simeq N_\gamma$.  
On ${\mathbb V}_{\alpha,{\blue \gamma}}^\beta(k)$
acts the algebraic group $\Aut_{\mathcal{N}}(N_\alpha(k))\times \Aut_{\mathcal{N}}( N_\beta(k))$
via $(g,h)\cdot f=hfg^{-1}$. The orbits of this action correspond bijectively to isomorphism
classes of objects in $\mathcal S_{\alpha,\gamma}^\beta$.
For a map $f:N_\alpha(k)\to N_\beta(k)$, denote by $G_f$ the orbit of $f$ in
${\mathbb V}_{\alpha,{\blue \gamma}}^\beta(k)$.

We recall a theorem which yields information about the dimensions of orbits 
in ${\mathbb V}_{\alpha,\gamma}^\beta(k)$. This theorem was proved in \cite[Section 5]{kos-sch}.

\begin{thm}\label{thm-second-main-old}
Let $k$ be an algebraically closed field, and let $\alpha,\beta,\gamma$ be partitions.
Suppose the arc diagram $\Delta$ of an invariant subspace
$Y=(N_\alpha,N_\beta,f)\in {\mathbb V}_{\alpha,\gamma}^\beta(k)$ has $x(\Delta)$ crossings. Then
$$\dim G_f\;=\; \deg h_{\alpha,\gamma}^\beta + \deg a_\alpha -x(\Delta),$$
where $\deg h_{\alpha,\gamma}^\beta = n(\beta)-n(\alpha)-n(\gamma)$
is the degree of the Hall polynomial $h_{\alpha,\gamma}^\beta(q)$ and
$\deg a_\alpha=|\alpha|+2 n(\alpha)$ is the degree of the polynomial
$a_\alpha(q)$ which counts the automorphisms of $N_\alpha(\mathbb F_q)$.\qed
\end{thm}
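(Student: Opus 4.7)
The proof has two main components: an orbit–stabilizer reduction to a dimension formula for $\End_{\mathcal S}(Y)$, and the verification of that formula in terms of the arc diagram.

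First I would apply orbit–stabilizer to the action of $G = \Aut_{\mathcal N}(N_\alpha) \times \Aut_{\mathcal N}(N_\beta)$ on $\mathbb V_{\alpha,\gamma}^\beta(k)$, so that $\dim G_f = \dim G - \dim \mathrm{Stab}_G(f)$. Since $k$ is algebraically closed, $\Aut_{\mathcal N}(N_\lambda)$ is a Zariski-dense open subset of $\End_{\mathcal N}(N_\lambda)$, and the classical count gives $\dim_k \End_{\mathcal N}(N_\lambda) = |\lambda| + 2n(\lambda) = \deg a_\lambda$. A pair $(g,h) \in G$ stabilizes $f$ exactly when $hf = fg$, i.e.\ when it is an endomorphism of $Y = (N_\alpha, N_\beta, f)$ in $\mathcal S$ which is invertible on both factors; hence $\dim \mathrm{Stab}_G(f) = \dim_k \End_{\mathcal S}(Y)$. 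With these inputs, the theorem is equivalent to
$$\dim_k \End_{\mathcal S}(Y) \;=\; \deg a_\beta - \deg h_{\alpha, \gamma}^\beta + x(\Delta) \;=\; |\beta| + n(\beta) + n(\alpha) + n(\gamma) + x(\Delta).$$

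To prove this identity I would decompose $Y = \bigoplus_i Y_i$ into pickets and bipickets according to $\Delta$ (Section~\ref{section-pickets}) and evaluate $\dim \End_{\mathcal S}(Y) = \sum_{i,j} [Y_i, Y_j]_{\mathcal S}$ via the Hom-dimension table of Section~\ref{section-dim-hom}. The ``unperturbed'' part of that table depends only on the multiset of arc endpoints and pole heights and, after reorganization (expanding the various $\min\{\cdot,\cdot\}$'s by splitting sums at the relevant thresholds), collapses to $|\beta| + n(\beta) + n(\alpha) + n(\gamma)$. The correction term $-\mathbf{1}\{\ell > m,\, t \leq r\}$ in $[B_2^{\ell,t}, B_2^{m,r}]$ detects nested ordered pairs of arcs. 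A sanity check on move {\bf (A)} (cf.\ the matrices in Section~\ref{section-operations}) shows that replacing two nested arcs by two crossing arcs increases $\dim \End_{\mathcal S}$ by exactly $1$, matching the increase of $x(\Delta)$ by $1$. A pair-by-pair accounting then identifies the total ``nested'' deficit plus the crossing contribution with $+ x(\Delta)$, with analogous small bookkeeping for arc/pole incidences coming from the $B_2$-vs-$P_1$ rows.

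A possibly cleaner alternative is induction along the arc-order. The minimal diagrams have $x(\Delta) = 0$ and their orbits are open in the top-dimensional components of $\mathbb V_{\alpha,\gamma}^\beta$; the dimension of those components is read off from $|\mathbb V_{\alpha,\gamma}^\beta(\mathbb F_q)| = h_{\alpha,\gamma}^\beta(q)\, a_\alpha(q)$ as $\deg h_{\alpha,\gamma}^\beta + \deg a_\alpha$, giving the base case. For the inductive step, each arc move {\bf (A)}--{\bf (D)} is realized by one of the short exact sequences displayed in Section~\ref{section-operations}; by Bongartz's theorem on orbit-closure boundaries, the codimension of the degenerated orbit inside the closure of the bigger orbit equals the dimension of a specific $\Hom$-space, which the matrices display as exactly $1$. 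Thus each move drops $\dim G_f$ by one while raising $x(\Delta)$ by one, and the formula propagates. The main obstacle in either route is the combinatorial bookkeeping: matching the correction pattern of the Hom table (or equivalently, the Bongartz codimension of each arc-move degeneration) to the crossing number $x(\Delta)$ requires a careful case analysis over arc/arc and arc/pole interactions, organized through the exact sequences already listed in Section~\ref{section-operations}.
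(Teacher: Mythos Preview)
The paper does not actually prove this theorem: it is quoted verbatim from \cite[Section~5]{kos-sch} and closed with a \qed, serving only as input for the proof of Theorem~\ref{thm-second-main} in Section~\ref{subsec-dim-orbit}. So there is no in-paper argument to compare against, and your proposal should be read as an attempt to reconstruct the proof from \cite{kos-sch}.

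Your orbit--stabilizer reduction is exactly right and is in fact the same mechanism the present paper uses in Section~\ref{subsec-dim-orbit} (equations \eqref{dim-G_f}--\eqref{dim-G_f2}) to pass between $\mathbb V_{\alpha,\gamma}^\beta$ and ${}_2\mathbb V_\gamma^\beta$: identify the stabilizer of $f$ with $\Aut_{\mathcal S}(Y)$, use that this is Zariski-open in the linear space $\End_{\mathcal S}(Y)$, and reduce the statement to
\[
\dim_k\End_{\mathcal S}(Y)\;=\;|\beta|+n(\beta)+n(\alpha)+n(\gamma)+x(\Delta).
\]
This reduction is clean and matches what is done in \cite{kos-sch}.

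Where your write-up becomes sketchy is the second half. In the direct approach, your claim that the $-\mathbf 1\{\ell>m,\ t\le r\}$ correction ``detects nested ordered pairs of arcs'' is correct, but the passage from a count of nested pairs to $x(\Delta)$ is not automatic: you also need the contributions from $[B_2^{\ell,t},P_1^m]$, $[P_1^\ell,B_2^{m,r}]$ and $[P_1^\ell,P_1^m]$ to account for arc/pole and pole/pole crossings, and to check that the ``unperturbed'' sum really collapses to $|\beta|+n(\beta)+n(\alpha)+n(\gamma)$ requires a genuine (if routine) calculation, not just a reorganization. In the inductive approach, the base case needs more than the point count $|\mathbb V_{\alpha,\gamma}^\beta(\mathbb F_q)|=h_{\alpha,\gamma}^\beta(q)\,a_\alpha(q)$: this gives the dimension of the \emph{variety}, but you must also argue that every crossing-free diagram gives an orbit of that full dimension (equivalently, that all such orbits are open in some top-dimensional component), which is not immediate when $c_{\alpha,\gamma}^\beta>1$. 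For the inductive step, the assertion that each move \textbf{(A)}--\textbf{(D)} drops $\dim G_f$ by exactly one is equivalent to $[Z,Z]_{\mathcal S}-[Y,Y]_{\mathcal S}=1$; this does follow from the $\delta H$-matrices in Section~\ref{section-operations}, but it is a computation, not a direct citation of Bongartz. None of these are fatal gaps, but each needs to be written out rather than asserted.
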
 

\subsection{The dimensions of the orbits in ${}_{2}{\mathbb V}^\beta_\gamma.$}\label{subsec-dim-orbit}
For a~group $G$ acting on a~variety $X$ and for $y\in X$, we denote by $G.y$ the stabilizer of $y$ in $G$.

\begin{proof}[Proof of Theorem~\ref{thm-second-main}]
Let $Y=(N_\alpha,N_\beta,f)\in {}_2\mathcal{S}_\gamma^\beta$ and let 
$F=(\varphi_\alpha,f,\varphi_\beta)\in {}_2{\mathbb V}_\gamma^\beta$ 
corresponds to $Y$. Denote by $\Delta$ the arc diagram diagram corresponding to $Y$ and assume 
that $\Delta$ has $x(\Delta)$ crossings.

It is well known that
\begin{equation}
\dim \mathcal{O}_F=\dim Gl(a,b) - \dim Gl(a,b).F\label{dim-O_f}
\end{equation}
where $a=|\alpha|$ and $b=|\beta|$. Moreover
\begin{equation}
  \dim \Aut(Y)=\dim Gl(a,b).F.\label{stab-aut}
\end{equation}


Similarly,
\begin{eqnarray*}
\dim G_f & = & \dim \Aut_{\mathcal{N}}(N_\alpha(k))\times \Aut_{\mathcal{N}}( N_\beta(k)) \\
 & & \qquad - \dim (\Aut_{\mathcal{N}}(N_\alpha(k))\times \Aut_{\mathcal{N}}( N_\beta(k))).f.\label{dim-G_f}
\end{eqnarray*}

and
\begin{equation}
\dim \Aut(Y)=\dim (\Aut_{\mathcal{N}}(N_\alpha(k))\times \Aut_{\mathcal{N}}(N_\beta(k))).f.\label{stab-aut2}
\end{equation}

Substituting \ref{stab-aut} into \ref{dim-O_f} we obtain
\begin{equation}
\dim \mathcal{O}_F=\dim Gl(a,b) - \dim \Aut(Y)\label{dim-O_f2}
\end{equation}
and similarly for \ref{dim-G_f} we have
\begin{equation}
\dim G_f=\dim \Aut_{\mathcal{N}}(N_\alpha(k))\times \Aut_{\mathcal{N}}( N_\beta(k)) - \dim \Aut(Y).\label{dim-G_f2}
\end{equation}

Combining \ref{dim-O_f2} with \ref{dim-G_f2} yields 
\begin{equation}
\dim \mathcal{O}_F=\dim Gl(a,b) - (\dim \Aut_{\mathcal{N}}(N_\alpha(k))\times \Aut_{\mathcal{N}}( N_\beta(k))-\dim G_f).
\end{equation}

Finally, we obtain from Theorem \ref{thm-second-main-old}
$$\dim \mathcal{O}_F\;=\;|\beta|^2+|\alpha|^2-n(\alpha)-n(\beta)-n(\gamma)-|\beta|-x(\Delta).$$
\end{proof}

\subsection{The effect of a single arc move}

We finish by presenting an example which shows that there is no upper bound on the 
change of the orbit dimension resulting from a single arc move.

\begin{ex}  
Let $\beta=(3,1,1,\ldots,1), \gamma=(2)$. 
Then in the category ${\mathcal S}_\gamma^\beta$ 
we have only two objects $X=B_2^{3,1}\oplus\bigoplus_{i=1}^n P_1^1$
and $Y=P_1^{3}\oplus\bigoplus_{i=1}^{n+1} P_1^1$, up to isomorphism.
They have the following arc diagrams: 
$$
\begin{picture}(32,15)(0,3)
\put(-12,8){$\Delta(X):$}
\put(0,4){\line(1,0){16}}
\multiput(3,3)(4,0)3{$\bullet$}
\put(8,4){\oval(8,8)[t]}
\put(12.2,16){$...$}
\put(13.2,19){$\scriptstyle n$}
\put(12,4){\line(1,3){5}}
\put(12,4){\line(0,1){15}}
\put(4,1){\makebox[0mm]{$\scriptstyle 3$}}
\put(8,1){\makebox[0mm]{$\scriptstyle 2$}}
\put(12,1){\makebox[0mm]{$\scriptstyle 1$}}
\end{picture}
\qquad\qquad\qquad
\begin{picture}(32,15)(0,3)
\put(-12,8){$\Delta(Y):$}
\put(0,4){\line(1,0){16}}
\multiput(3,3)(4,0)3{$\bullet$}
\put(4,4){\line(0,1){15}}
\put(12.2,16){$...$}
\put(11.2,19){$\scriptstyle n+1$}
\put(12,4){\line(-1,6){2.6}}
\put(12,4){\line(1,3){5}}
\put(12,4){\line(0,1){15}}
\put(4,1){\makebox[0mm]{$\scriptstyle 3$}}
\put(8,1){\makebox[0mm]{$\scriptstyle 2$}}
\put(12,1){\makebox[0mm]{$\scriptstyle 1$}}\end{picture}
$$

Note that we can obtain the diagram $\Delta(Y)$ 
from the diagram $\Delta(X)$ by a single move {\bf(E)}. 
Using Theorem~\ref{thm-second-main}, we compute

\begin{eqnarray*}
\dim\mathbb{V}_{\Delta(X)} & = & |\beta|^2+|\alpha|^2-(2\cdot 0+1\cdot 1+1\cdot 2+\ldots+1\cdot n)\\
 & & \qquad -n(\beta)-n(\gamma)-|\beta|-0\\
 & = & |\beta|^2+|\alpha|^2-(1\cdot 0+1\cdot 1+1\cdot 2+\ldots+1\cdot n+1\cdot (n+1))\\
 & & \qquad -n(\beta)-n(\gamma)-|\beta|-0 \quad +(n+1)\\
 & = & \dim\mathbb{V}_{\Delta(Y)}\quad + (n+1).
\end{eqnarray*}

\end{ex}

  \bigskip
Address of the authors:

\parbox[t]{4cm}{\footnotesize\begin{center}
              Faculty of Mathematics\\
              and Computer Science\\
              Nicolaus Copernicus University\\
              ul.\ Chopina 12/18\\
              87-100 Toru\'n, Poland\end{center}}
\parbox[t]{4cm}{\footnotesize\begin{center}
              Faculty of Mathematics\\
              and Computer Science\\
              Nicolaus Copernicus University\\
              ul.\ Chopina 12/18\\
              87-100 Toru\'n, Poland\end{center}}
  \parbox[t]{4cm}{\footnotesize\begin{center}
              Department of\\
              Mathematical Sciences\\ 
              Florida Atlantic University\\
              777 Glades Road\\
              Boca Raton, Florida 33431\end{center}}

\parbox[t]{4cm}{\centerline{\footnotesize\tt kanies@mat.umk.pl}}
\parbox[t]{4cm}{\centerline{\footnotesize\tt justus@mat.umk.pl}}
           \parbox[t]{4cm}{\centerline{\footnotesize\tt markus@math.fau.edu}}

%
%

\end{document}